\newtcolorbox{highlighted}{colback=yellow,coltext=red,breakable}
\newcommand{\paragraphNew}[1]{\paragraph{\underline{#1}}}
\newtheorem{thm}{Theorem}[section]
\newtheorem{prop}[thm]{Proposition}
\newtheorem{lem}[thm]{Lemma}
\newtheorem{remark}{Remark}
\renewcommand{\O}{\Omega}
\newcommand\E{{E}}  
\newcommand\Th{{\mathcal T}_h}
\newcommand\Fh{{\mathcal F}_h}
\newcommand\R{\mathbb{R}}
\newcommand\C{\mathbb{C}}
\newcommand\D{\mathbb{D}}
\renewcommand{\P}{{\mathcal P}}  
\def\decapita#1{}
\def\grecomultibold#1#2{\grecobolddef#1\def\secondobold{#2}%
	\ifx#2\finemultibold\let\next\relax\let\secondobold\relax
	\else\let\next\grecomultibold
	\fi\expandafter\next\secondobold}
\def\grecobolddef#1{%
	\edef\dadef{bf\expandafter\decapita\string#1}%
	\expandafter\def\csname\dadef\endcsname{{\neretto #1}}}
\def\neretto#1{\setbox0=\hbox{\mathsurround=0pt$#1$}%
	\kern.02em\copy0 \kern-\wd0
	\kern-.02em\copy0 \kern-\wd0
	\raise.03em\box0 \kern.02em}
\def\bdiv{\mathop{\bf div}\nolimits}
\def\teps{{\bfvarepsilon}}
\def\T{{\mathcal T}}
\def\Ih{{\mathcal I}_h}
\def\IE{{\mathcal I}_E}
\def\wbox#1;#2;{\vbox{\hrule\hbox{\vrule height#1mm\kern#2mm\vrule
			height#1mm}\hrule}}
\let\phi\varphi
\newcommand{\bbf}   {\mathbf{f}}
\newcommand{\bbn}   {\mathbf{n}}
\newcommand{\bbp}   {\mathbf{p}}
\newcommand{\bbr}   {\mathbf{r}}
\newcommand{\bbt}   {\mathbf{t}}
\newcommand{\bbu}   {\mathbf{u}}
\newcommand{\bbv}   {\mathbf{v}}
\newcommand{\bbw}   {\mathbf{w}}
\newcommand{\bbx}   {\mathbf{x}}
\def\C{\mathbb C}
\def\hpoint#1.#2.#3{{\underline{#1}}_{#2}\cdot
	{\underline{\mathop{\smash{#3}\vphantom{{#1}_{#2}}}}}}
\def\npointp#1.#2{{\underline{#1}}\cdot
	{\underline{\mathop{\smash{#2}\vphantom{{#1}}}}}}
\def\beq{\begin{equation}}
\def\enq{\end{equation}}
\def\bfzero{{\bf 0}}
\def\xt{{\tilde x}}
\def\yt{{\tilde y}}
\def\bxt{{\tilde \bbx}}
\newcommand{\dO}    {\text{d}\Omega}
\newcommand{\dEl}    {\text{d} E}
\newcommand{\df}    {\text{d} f}
\definecolor{ambre(sae/ece)}{rgb}{1.0, 0.49, 0.0}
\definecolor{capri}{rgb}{0.0, 0.75, 1.0}
\newcommand{\coorF}[1]{{\bf {\color{capri} #1}}}
\author[1]{F. Dassi\thanks{{\tt franco.dassi@unimib.it}}}
\author[2,3]{C. Lovadina\thanks{{\tt carlo.lovadina@unimi.it}}}
\author[1]{M. Visinoni\thanks{{\tt michele.visinoni@unimib.it}}}
\affil[1]{{\footnotesize Dipartimento di Matematica e Applicazioni, Universit\`a degli studi di Milano Bicocca, Via Roberto Cozzi 55 - 20125 Milano, Italy}}
\affil[2]{{\footnotesize Dipartimento di Matematica, Universit\`a di Milano, Via Saldini 50, 20133 Milano, Italy}}
\affil[3]{{\footnotesize IMATI del CNR, Via Ferrata 1, 27100 Pavia, Italy}}
\date{}
\title{A three-dimensional Hellinger-Reissner \\ Virtual Element Method \\ for linear elasticity problems}
\begin{document}
	
	\maketitle

	\begin{abstract}
		We present a Virtual Element Method for the 3D linear elasticity problems, based on Hellinger-Reissner variational principle.
		In the framework of the small strain theory, we propose a low-order scheme with a-priori symmetric stresses and continuous tractions across element interfaces. 
		A convergence and stability analysis is developed and we confirm the theoretical predictions via some numerical tests.
		
	\end{abstract}

	{
		\section{Introduction}\label{s:intro}
		The Virtual Element Method (VEM), introduced in~\cite{volley,hitchhikers}, is a recent technology for the approximation of partial differential equation problems. This method is a generalization of the Finite Element Method (FEM) which allows to deal with arbitrary polygonal/polyhedral meshes, also including non convex and distored elements.
		To garantee this flexibility, the virtual element method abandons the idea of the local polynomial approximation, typical of FEM, to use approximating functions which are solution of suitable local PDE.
		In general, these non-polynomial functions are not explicitly known. 
		Therefore, the main idea of this method is to exploit the available information (the degrees of freedom) to compute the stiffness matrix and the right-hand side of the discretized problem.
		%
		
		%
		During these years, VEM have been employed with success both in mathematical and engineering communities.
		Here we mention, as a rapresentative non-exaustive sample, a brief list of papers~\cite{BBDMR_familyMagnetos,HighOrder3D,BLRXX,BLV,VEM-curved,bertoluzza2017,BRENNERVEM1,BRENNERVEM2,BCP,DASSI2019,MascottoDassi,Mascotto2018}.
		In the framework of structural mechanics problems, we cite the recent works~\cite{Andersen2017,ABLS_part_I,ABLS_part_II,ARTIOLI2017155,ARTIOLI2018978,ARTIOLI_RICOVERYVEM,wriggers2017,wriggers,ZHANG20181} and~\cite{BeiraodaVeiga-Brezzi-Marini:2013,lovadina,Brezzi-Marini:2012,Paulino-VEM}, for instance. However, we remark that VEM is not the only technology which can make use the polytopal meshes. Considering elasticity problems, we mention~\cite{bottiDipietroSochala,cockburn2017,DiPietro-Ern-1,cockburn2015,prada2017} as representative examples.
		In the present paper we extend the study presented in~\cite{ARTIOLI2017155} to the three dimensional case. More precisely, we design and analyze a low-order virtual element method for linear elasticity problems. Within the framework of small displacements and small deformations, we consider the Hellinger-Reissner variational principle as the basis of our discretization procedure. This mixed formulation describes the problem by means of both the displacement and the stress fields. In the Finite Element practice it is a difficult task designing a stable and accurate scheme that preserves both the symmetry of the stress tensor and the continuity of the tractions at the inter-elements, see for instance \cite{BoffiBrezziFortin} and \cite{ArnoldWinther}. The fundamental reason behind this difficulty lies in the local polynomial approximation, which forces the introduction of nodal degrees of freedom for the stress unknown. The resulting finite element schemes are typically quite cumbersome, especially in the three dimensional case, see \cite{ArnoldAwanouWinther}. Furthermore, the presence of nodal degrees of freedom introduces an additional complication if one aims at using the hybridization procedure to solve the discrete linear system, see \cite{ArnoldBrezzi_1985}.
		We exploit the flexibility of virtual element methods to avoid these drawbacks and to develop an optimal scheme which is reasonably cheap with respect to the delivered accuracy. Since the approximated stresses does not have nodal degrees of freedom (on the contrary, the degress of freedom are entirely local to each polyhedron face), the hybridization procedure could be easily applied to our VEM scheme. This aspects show that, even for tetrahedral or hexahedral meshes, the proposed VEM method is a valid alternatives to FEM schemes.  
		The paper is organized as follows. In Section~\ref{sec:1} we briefly introduce the classical Hellinger-Reissner formulation of the 3D elasticity problem.
		Section~\ref{s:HR-VEM} describes the Virtual Element approximation we propose, while Section~\ref{s:theoretical} is about the convergence analysis of the method.
		The numerical experiments, which confirm the theoretical predictions, are detailed in Section~\ref{s:numer}. Finally, we draw some conclusions.
		\paragraph{Space notation.}
		Throughout the paper, we will make use of standard notations regarding Sobolev spaces, norms and seminorms (cf. \cite{Lions-Magenes} for example). In addition, given two quantities $a$ and $b$, we write $a\lesssim b$ when there exists a constant $C$, indipendent of the meshsize, such that $a\leq C\, b$. 
		Finally, given any subset $A\subset\R^n$ and an integer $k\geq 0$, $\P_k(A)$ denotes the space of polynomials up to degree $k$, defined on $A$; whereas, given a functional space $X$, we denote by $\left[X\right]^{3\times 3}_s$ the $3 \times 3$ symmetric tensors whose components belong to the space $X$.
		\paragraph{Mesh notation.}
		Given a polyhedron $E$ with $n_f^E$ faces we denote its volume, diameter and barycenter by $|E|$, $h_E$ and $\bbx_E$, respectively. In a similar way we refer to the area, diameter and barycenter of a face $f$, while $|e|$ denotes the length of the edge $e$. 		
		Given a polygonal face $f$, we use $\bbx$ and $\bxt$ to indicate the global and local coordinates of a generic point of $f$, respectively.
		\section{The elasticity problem in mixed form}\label{sec:1}
		In this section we briefly present the 3D elasticity problem based on Hellinger-Reissner principle~\cite{BoffiBrezziFortin, Braess:book}:
		\begin{equation*}\label{strong}
		\left\lbrace{
			\begin{aligned}
			&\mbox{Find } (\bfsigma,\bbu)~\mbox{such that}\\
			&-\bdiv \bfsigma= \bbf\  &\mbox{in $\Omega$}\\
			& \bfsigma = \C \teps(\bbu)\ &\mbox{in $\Omega$}\\
			&\bbu=\bfzero\ &\mbox{in $\partial\Omega$}
			\end{aligned}
		} \right.
		\end{equation*}
		where $\O\subseteq\R^3$ is a polyhedral domain, $\bfsigma$ and $\bbu$ represent the stress and the displacement fields, respectively.
		We define the bilinear form $a(\bfsigma,\bftau):=(\D \bfsigma, \bftau)$, where $(\cdot,\cdot)$ is the scalar product in $L^2$, and $\D:=\C^{-1}$. Then, the mixed variational formulation reads:
		\begin{equation}\label{cont-pbl}
		\left\lbrace{
			\begin{aligned}
			&\mbox{Find } (\bfsigma,\bbu)\in \Sigma\times U~\mbox{such that}\\
			&a(\bfsigma,\bftau) + (\bdiv \bftau, \bbu)=0  &\forall \bftau\in \Sigma\\
			& (\bdiv \bfsigma, \bbv) = -(\bbf,\bbv) &\forall \bbv\in U
			\end{aligned}
		} \right.
		\end{equation}
		where $\bbf \in \left[L^2(\O)\right]^3$ is the loading term, while the spaces $U$ and $\Sigma$ are $$U=\left[L^2(\Omega)\right]^3$$
		and
		$$
		\Sigma=\left\{ \bftau\in H(\bdiv;\Omega)\ :\ \bftau \mbox{ is symmetric} \right\}.
		$$
		As usual, $H(\bdiv;\Omega)$ is the space of tensor in $\left[L^2(\O)\right]^{3\times 3}$ whose divergence is the vector-valued operator in $\left[L^2(\O)\right]^3$. 
		The elasticity fourth-order symmetric tensor $\D$ is assumed to be uniformly bounded and positive-definite. 
		
		It is well known that Problem~\eqref{cont-pbl} is well posed, cf. \cite{BoffiBrezziFortin} for example. In particular, considering the natural norms
		\begin{equation*}
		||\bfsigma||_{\Sigma}^2:=\int_{\O}|\bfsigma|^2~\dO+\int_{\O}|\bdiv(\bfsigma)|^2~\dO,\qquad ||\bbu||^2_{U}:=\int_{\O}|\bbu|^2~\dO,
		\end{equation*}
		it holds:
		$$
		||\bfsigma||_{\Sigma}+||\bbu||_{U}\leq C||\bbf||_{0}
		$$
		where C is a constant depending on $\O$ and on the material tensor $\D$, which, however, does not degenerate in the incompressible limit. We also remark that a possible interesting variant of the variational formulation \eqref{cont-pbl} has been recently proposed and studied in \cite{schoeberl1} and \cite{schoeberl2}.
		
		When considering a polyhedral mesh $\T_h$ of the domain $\Omega$, the bilinear form $a(\cdot,\cdot)$ is split as
		\begin{equation*} a(\bfsigma,\bftau)=\sum_{{\E\in \Th}}a_E(\bfsigma,\bftau) \quad \textrm{ with } \quad
		a_E(\bfsigma,\bftau) : = \int_E \D \bfsigma : \bftau~\dEl
		\end{equation*}
		for all $\bfsigma,\bftau \in \Sigma$.
		Similarly, it holds
		\begin{equation*}
		(\bdiv \bftau, \bbv)=\sum_{{\E\in \Th}}(\bdiv \bftau, \bbv)_E \quad \textrm{ with } \quad
		(\bdiv \bftau,\bbv)_E : = \int_E \ \bdiv \bftau \cdot \bbv~\dEl ,
		\end{equation*}
		for all $(\bftau,\bbv) \in \Sigma\times U$.
		%
		%
		\section{The Virtual Element Method}\label{s:HR-VEM}
		In this section we define our Virtual Element discretization of Problem \eqref{cont-pbl}.
		Let $\{\mathcal{T}_h\}_h$ be a sequence of decompositions of $\Omega$ into general polyhedral elements $E$ with
		\[
		%
		h := \sup_{E \in \mathcal{T}_h} h_E.
		\]
		%
		We suppose that for all $h$, each element $E$ in $\mathcal{T}_h$ fulfils the following standard assumptions (cf. \cite{volley}):
		\begin{itemize}
			\item $\mathbf{(A1)}$ $E$ is star-shaped with respect to a ball of radius $ \ge\, \gamma \, h_E$,
			\item $\mathbf{(A2)}$ for every face $f\in\partial E$ we have  $h_f \ge \gamma h_E$ and $f$ is star-shaped with respect to a disk of radius $ \ge\, \gamma\, h_f$,
			\item $\mathbf{(A3)}$ for every edge $e\in\partial f$, we have $|e| \ge \gamma \, h_f\ge \gamma^2\, h_E$,
		\end{itemize}
		where $\gamma$ is a suitable positive constant. We also assume that the elasticity tensor $\D$ is piecewise constant with respect to the decomposition $\T_h$, i.e. $\D$ is constant on each polyhedron of the mesh~\cite{ARTIOLI2017155}.
		%
		\subsection{The local spaces}\label{ss:E-spaces}
		%
		To describe the local spaces employed in the VEM proposed scheme, we introduce two spaces: $RM(E)$ and $T_h(f)$.

		\paragraphNew{Space $RM(E)$.} It is the space of local infinitesimal rigid body motions: 
		\begin{equation}\label{eq:rigid}
		RM(E):=\left\{ \bbr(\bbx) = \bfalpha + \bfomega \wedge \big(\bbx -\bbx_E\big)\ \ \text{s.t.}\ \bfalpha,\ \bfomega \in\R^3 \right\},	
		\end{equation}
		whose dimension is $6$.
		%
		%
		\paragraphNew{Space $T_h(f)$.} For each face $f\in\partial E$, we introduce
		\begin{equation}\label{eq:face_approx}
		T_h(f):=\left\{ \bfpsi(\bxt)=\,\bbt_{f} + a\big[\bbn_{f}\wedge(\bbx(\bxt) -\bbx_f)\big]+ p_1(\bxt)\bbn_{f},\ \text{s.t.}\ a\in \R,\ p_1(\bxt)\in\P _1(f) \right\},
		\end{equation}
		where $\bbn_{f}$ the outward normal to the face $f$, and $\bbt_{f}$ is an arbitrary vector tangent to the face $f$. Above, $\bbx(\bxt)$ is the three dimensional position vector of a point on $f$, determined by the two local coordinates $\bxt$.
		The dimension of such a space is $6$: 
		\begin{itemize}
			\item The three dimensional tangent vector $\bbt$ is determined by a linear combination of two given linearly independent tangential vectors $\bbt_1$ and $\bbt_2$, i.e. 
			\begin{equation*}
			\bbt=b_1\bbt_1+b_2\bbt_2.
			\end{equation*}
			\item The rotational term $a\big[\bbn_{f}\wedge(\bbx(\bxt) -\bbx_f)\big]$ is determined by a single scalar value $a\in\R$.
			\item The polynomial $p_1(\bxt)\in\P_1(f)$ is a two variable polynomial with respect to the local face coordinate system so it is determined by three parameters, for instance:
			\begin{equation*}
			p_1(\bxt)=c_1+c_2(\xt-\xt_f)+c_3(\yt-\yt_f).
			\end{equation*}
		\end{itemize}
		Therefore, this space consists of vector functions whose tangential component is a 2D face rigid body motion (the first two terms of \eqref{eq:face_approx}), while the normal component is a linear two-variable polynomial (the last term of \eqref{eq:face_approx}). 
		\paragraphNew{Stress space.} 
		We are now ready to introduce our local approximation space for the stress field:
		\begin{equation}\label{eq:local_stress}
		\begin{aligned}
		\Sigma_h(E):=\Big\{ \bftau_h\in & H(\bdiv;E)\ :\ \exists\, \bbw^\ast\in \left[H^1(E)\right]^3 \mbox{ such that } \bftau_h=\C\teps(\bbw^\ast);\\ 
		&(\bftau_h\,\bbn)_{|f}\in T_h(f) \quad \forall f\in \partial E;\quad \bdiv\bftau_h\in RM(E) \Big\}.
		\end{aligned}
		\end{equation}
		We have the following Proposition.
		
		%
		\begin{prop}\label{pr:divFromdofs}
			Let $\bftau_h\in\Sigma_h(E)$, then $\bdiv\bftau_h$ is completely determined by $(\bftau_h\,\bbn)_{|f}$, with $f\in\partial E$ face of $E$.
			More precisely, setting (cf \eqref{eq:rigid})
			\begin{equation}\label{eq:div1}
			\bdiv\bftau_h = \bfalpha_E + \bfomega_E\wedge\big(\bbx -\bbx_E\big),
			\end{equation}
			it holds
			\begin{equation*}
			\bfalpha_E = \frac{1}{|E|} \left(\sum_{f\in\partial E}\int_f (\bftau_h\,\bbn)~\df\right),
			\end{equation*}
			and $\bfomega_E$ is the unique solution of the $3 \times 3$  linear system
			\begin{equation}\label{eq:div_dofs_omega}
			\int_E  \big(\bbx -\bbx_E\big)\wedge \left[\bfomega_E\wedge
			\big(\bbx -\bbx_E\big)\right]~\dEl
			=\sum_{f\in\partial E} \int_{f}  \big(\bbx -\bbx_E\big)\wedge (\bftau_h\bbn)~\df.
			\end{equation}
		\end{prop}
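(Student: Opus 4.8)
The plan is to recover both $\bfalpha_E$ and $\bfomega_E$ by testing the divergence theorem against the two ingredients of a rigid body motion, namely constants and the field $\bbx-\bbx_E$, and then to exploit the symmetry of $\bftau_h$ together with the fact that $\bbx_E$ is the barycenter of $E$. First I would integrate \eqref{eq:div1} over $E$. On the one hand, the divergence theorem gives $\int_E \bdiv\bftau_h\,\dEl = \sum_{f\in\partial E}\int_f \bftau_h\,\bbn\,\df$; on the other hand, inserting the rigid-body expression and using $\int_E(\bbx-\bbx_E)\,\dEl = \bfzero$ (since $\bbx_E$ is the barycenter) leaves only $|E|\,\bfalpha_E$. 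Equating the two yields the stated formula for $\bfalpha_E$.

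For $\bfomega_E$ I would instead take the cross product of \eqref{eq:div1} with $\bbx-\bbx_E$ and integrate over $E$. The constant part contributes $\big(\int_E(\bbx-\bbx_E)\,\dEl\big)\wedge\bfalpha_E=\bfzero$ by the barycenter property, so the volume integral of $(\bbx-\bbx_E)\wedge\bdiv\bftau_h$ reduces exactly to the left-hand side of \eqref{eq:div_dofs_omega}. The crucial identity to establish is the integration-by-parts formula
\begin{equation*}
\int_E (\bbx-\bbx_E)\wedge\bdiv\bftau_h\,\dEl = \sum_{f\in\partial E}\int_f (\bbx-\bbx_E)\wedge(\bftau_h\,\bbn)\,\df,
\end{equation*}
which I expect to be the main obstacle. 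Componentwise, integrating by parts produces the boundary term above plus a volume correction obtained by contracting the Levi-Civita tensor with $\bftau_h$; this correction is proportional to the skew-symmetric part of $\bftau_h$, which vanishes because $\bftau_h=\C\teps(\bbw^\ast)$ is symmetric. This is precisely the point where the symmetry built into $\Sigma_h(E)$ is indispensable. Combining the identity with the reduced volume integral gives \eqref{eq:div_dofs_omega}.

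It remains to check that \eqref{eq:div_dofs_omega} determines $\bfomega_E$ uniquely. Using the expansion $(\bbx-\bbx_E)\wedge[\bfomega\wedge(\bbx-\bbx_E)] = |\bbx-\bbx_E|^2\,\bfomega - \big((\bbx-\bbx_E)\cdot\bfomega\big)(\bbx-\bbx_E)$, the linear map $\bfomega\mapsto\int_E(\bbx-\bbx_E)\wedge[\bfomega\wedge(\bbx-\bbx_E)]\,\dEl$ is represented by the inertia tensor of $E$. Testing against $\bfomega$ gives $\int_E\big(|\bbx-\bbx_E|^2|\bfomega|^2-((\bbx-\bbx_E)\cdot\bfomega)^2\big)\,\dEl$, which is nonnegative by Cauchy--Schwarz and vanishes only if $\bfomega$ is parallel to $\bbx-\bbx_E$ almost everywhere in $E$. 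Since $E$ is genuinely three-dimensional (indeed star-shaped with respect to a ball by $\mathbf{(A1)}$), this forces $\bfomega=\bfzero$, so the $3\times3$ matrix is symmetric positive definite and hence invertible. This shows that $\bfomega_E$, and therefore $\bdiv\bftau_h$, is completely determined by the face data $(\bftau_h\,\bbn)_{|f}$.
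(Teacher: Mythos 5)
Your proof is correct and follows essentially the same route as the paper: the paper packages your two computations into the single weak identity $\int_E \bdiv\bftau_h\cdot\bbr\,\dEl = \int_{\partial E}(\bftau_h\,\bbn)\cdot\bbr\,\df$ for all $\bbr\in RM(E)$, tested first with constants and then with $\bfomega\wedge(\bbx-\bbx_E)$, with the symmetry of $\bftau_h$ entering exactly where you place it (killing the skew volume term) and the barycenter property killing the cross terms. Your explicit verification that the inertia tensor is symmetric positive definite (via Cauchy--Schwarz and the three-dimensionality of $E$) is a bit more detailed than the paper's one-line remark, but the substance is identical.
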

		\begin{proof}
			Since $\bftau_h\in\Sigma_h(E)$, we have $\bdiv\bftau_h\in RM(E)$ and $(\bftau_h\,\bbn)_{|f}=\bfpsi(\bxt)_{|f}\in T_h(f)$, for $f\in\partial E$. Then, denoting with $\bfvarphi:\partial E\to \R^3$ the function such that $\bfvarphi_{|f}:=\bfpsi(\bxt)_{|f}$, the integration by parts:
			\begin{equation}\label{eq:compat}
			\int_E \bdiv\bftau_h\cdot \bbr~\dEl =
			\int_{\partial E}\bfvarphi\cdot \bbr~\df \qquad \forall \bbr\in RM(E) ,
			\end{equation}
			allows to compute $\bdiv\bftau_h$ using the degrees of freedom of the space $T_h(f)$. 

			Testing \eqref{eq:compat} with constant functions $\bbr(\bbx)=\bfalpha$ and recalling \eqref{eq:div1}, we have  
			\begin{equation*}
			\int_E \bfalpha_E\cdot\bfalpha~\dEl= \int_{\partial E} \bfvarphi_{|f}\cdot\bfalpha~\df =  \Big(\sum_{f\in\partial E}\int_f (\bftau_h\,\bbn)~\df\Big)\cdot\bfalpha
			\qquad \forall \bfalpha \in\R^3.
			\end{equation*}
			Hence, we obtain
			\begin{equation*}
			\bfalpha_E = \frac{1}{|E|} \Big(\sum_{f\in\partial E}\int_f (\bftau_h\,\bbn)~\df\Big).
			\end{equation*}
			Now, we test \eqref{eq:compat} selecting $\bbr(\bbx) = \bfomega\wedge\big(\bbx -\bbx_E\big)$. We have
			\begin{equation}\label{eq:rotation}
			\int_E \left[\bfomega_E\wedge\big(\bbx -\bbx_E\big)\right]\cdot \left[\bfomega\wedge\big(\bbx -\bbx_E\big)\right]~\dEl=\sum_{f\in\partial E} \int_{f} \bfphi_{|f} \cdot \left[\bfomega\wedge\big(\bbx -\bbx_E\big)\right]~\df\quad \forall \bfomega\in\R^3,
			\end{equation}
			i.e.
			\begin{equation*}
			\bfomega\cdot \left(\int_E  \big(\bbx -\bbx_E\big)\wedge \left[\bfomega_E\wedge
			\big(\bbx -\bbx_E\big)\right]~\dEl \right)
			=\bfomega\cdot\left(\sum_{f\in\partial E} \int_{f}  \big(\bbx -\bbx_E\big)\wedge \bfphi_{|f}~\df\right)\quad \forall \bfomega\in\R^3.
			\end{equation*}
			We then infer that $\bfomega_E$ satisfies 
			\begin{equation*}
			\int_E  \big(\bbx -\bbx_E\big)\wedge \left[\bfomega_E\wedge
			\big(\bbx -\bbx_E\big)\right]~\dEl
			=\sum_{f\in\partial E} \int_{f}  \big(\bbx -\bbx_E\big)\wedge (\bftau_h\bbn)~\df,
			\end{equation*}
			i.e. $\bfomega_E$ solves system \eqref{eq:div_dofs_omega}. We also notice that from \eqref{eq:rotation} we deduce that the linear operator
			$$
			\bfomega_E \longmapsto \int_E  \big(\bbx -\bbx_E\big)\wedge \left[\bfomega_E\wedge
			\big(\bbx -\bbx_E\big)\right]\dEl
			$$
			is symmetric and positive definite.	
		\end{proof}
		\noindent From Proposition \ref{pr:divFromdofs} and \eqref{eq:face_approx} we infer that the dimension of the space \eqref{eq:local_stress} is $$\dim( \Sigma_h(E))=6\,n^E_f.$$
		%
		%
		\paragraphNew{Displacement space.}
		The local approximation space for the displacement field is defined by, see \eqref{eq:rigid}: 
		\begin{equation}\label{eq:local_displ}
		U_h(E)=\left\{ \bbv_h\in  \left[L^2(E)\right]^3\ :\ \bbv_h\in RM(E) \right\},
		\end{equation}
		and it follows that  $$\dim(U_h(E))=6.$$
		
		%
		%
		
		%
		%
		\subsection{The local forms}\label{ss:E-bforms}
		In this section we introduce the VEM counterparts of the local forms associated with the continuous problem.
		\paragraphNew{The local mixed term.}Given $E\in \Th$, we begin by noticing that the term
		\begin{equation*}
		\left(\bdiv \bftau_h,\bbv_h\right)_{E}=\int_E \bdiv \bftau_h\cdot \bbv_h~\dEl
		\end{equation*}
		is computable for every $\bftau_h\in\Sigma_h(E)$ and $\bbv_h\in U_h(E)$ via degrees of freedom. For this reason, we do not need to introduce any approximation of the terms $(\bdiv \bftau, \bbu)$ and $(\bdiv \bfsigma, \bbv)$ in problem~\eqref{cont-pbl}.
		\paragraphNew{The local bilinear form $a_{E}(\cdot,\cdot)$.}
		The local bilinear form
		\begin{equation*}
		a_E(\bfsigma_h,\bftau_h)  = \int_E \D \bfsigma_h : \bftau_h~\dEl
		\end{equation*}
		is not computable for a general couple $(\bfsigma_h,\bftau_h)\in \Sigma_h(E)\times \Sigma_h(E)$.
		As it is standard in the VEM procedure (cf. \cite{volley}), we build a computable approximation of the bilinear form by defining a suitable projection operator onto local polynomial functions. In our case, we introduce $\Pi_E : \Sigma_h(E)\to \left[\P_0(E)\right]^{3\times 3}_s$, by requiring 
		\begin{equation}\label{eq:proj}
		\int_E \Pi_E\,\bftau_h: \bfpi_0 = \int_E \bftau_h: \bfpi_0~\dEl \qquad \forall\bfpi_0 \in \left[\P_0(E)\right]^{3\times 3}_s .
		\end{equation}
		This is a projection operator onto the constant symmetric tensor functions and it is computable. 
		Indeed, we notice that each $\bfpi_0 \in \left[\P_0(E)\right]^{3\times 3}_s$ can be written as the symmetric gradient of a linear vectorial function, i.e. $\bfpi_0 = \teps(\bbp_1)$, with $\bbp_1\in \left[\P_1(E)\right]^{3}$.  
		Hence,
		using the divergence theorem, the right-hand side of \eqref{eq:proj} becomes 
		\begin{equation*}
		\int_E \bftau_h: \bfpi_0~\dEl= \int_E \bftau_h: \teps(\bbp_1)~\dEl = -\int_E \bdiv \bftau_h \cdot \bbp_1~\dEl+\int_{ \partial E}(\bftau_h \bbn) \cdot \bbp_1~\df
		\end{equation*}
		which is clearly computable (see also Proposition \ref{pr:divFromdofs}).
		Then, the approximation of $a_E(\cdot,\cdot)$ reads: 
		\begin{equation}\label{eq:ah1}
		\begin{aligned}
		a_E^h(\bfsigma_h,\bftau_h)  &:=
		a_E(\Pi_E\,\bfsigma_h,\Pi_E\bftau_h) + s_E\left( (I-\Pi_E)\bfsigma_h, (I-\Pi_E)\bftau_h \right)\\
		&=\int_E \D (\Pi_E\bfsigma_h) : (\Pi_E\bftau_h)~\dEl + s_E\left( (I-\Pi_E)\bfsigma_h, (I-\Pi_E)\bftau_h \right) ,
		\end{aligned}
		\end{equation}
		where $s_E(\cdot,\cdot)$ is a suitable stabilization term. In this paper we propose the following choice:
		\begin{equation}\label{eq:stab1}
		s_E(\bfsigma_h,\bftau_h) : = \kappa_E\, h_E\int_{\partial E} (\bfsigma_h\bbn)\cdot \bftau_h\bbn~\df,
		\end{equation}
		Above, $\kappa_E$ is a positive constant to be chosen according to $\D$. For instance, in the numerical examples of Section \ref{s:numer}, $\kappa_E$ is set equal to $\frac{1}{2} {\rm tr}(\D_{|E})$. However, any norm of $\D_{|E}$ can be used. A possible variant of \eqref{eq:stab1} is
		\begin{equation*}
		s_E(\bfsigma_h,\bftau_h) : = \kappa_E\, \sum_{f\in\partial E}h_f\int_{f} (\bfsigma_h\bbn)\cdot \bftau_h\bbn~\df.
		\end{equation*}
		\paragraphNew{The local loading term.} We split the load term on each element and we have  
		\begin{equation*}
		(\bbf,\bbv_h)=\int_{\Omega}\bbf\cdot\bbv_h~\dO =\sum_{E\in\Th}\int_{E}\bbf\cdot\bbv_h~\dEl.
		\end{equation*}
		Since $\bbv_h\in RM(E)$, the right-hand side is computable via quadrature rules for polyhedral domains.
		
		\begin{remark}
			Since this integral involves a sufficiently regular function $f$,
			to get a ``good'' approximation we exploit a quadrature rule of high degree. 
			For the numerical examples in Section~\ref{s:numer}, we use a quadrature rule of degree~4.
	\end{remark}}

	\subsection{The discrete scheme}\label{ss:discrete}
	
	Starting from the local spaces and local terms introduced in the previous sections, 
	we can set the global problem. More specifically, we introduce a global approximation space for the stress field, by glueing the local approximation spaces, see \eqref{eq:local_stress}:
	
	\begin{equation}\label{eq:global-stress}
	\Sigma_h=\Big\{ \bftau_h\in  H(\bdiv;\Omega)\ :\ \bftau_{h|E}\in  \Sigma_h(E)\quad \forall E\in\Th \Big\}.
	\end{equation}
	For the global approximation of the displacement field, we take, see \eqref{eq:local_displ}:
	
	\begin{equation}\label{eq:global-displ}
	U_h=\Big\{ \bbv_h\in  \left[L^2(\Omega)\right]^3\ :\ \bbv_{h|E}\in U_h(E)\quad \forall E\in\Th \Big\}.
	\end{equation}
	Then, given a local approximation of $a_E(\cdot,\cdot)$, see \eqref{eq:ah1}, we set
	
	\begin{equation}\label{eq:global-ah}
	a_h(\bfsigma_h,\bftau_h):= \sum_{E\in\Th}a_E^h(\bfsigma_h,\bftau_h) .
	\end{equation}
	The method we consider is then defined by
	
	\begin{equation}\label{eq:discr-pbl-ls}
	\left\lbrace{
		\begin{aligned}
		&\mbox{Find } (\bfsigma_h,\bbu_h)\in \Sigma_h\times U_h~\mbox{such that}\\
		&a_h(\bfsigma_h,\bftau_h)    + (\bdiv \bftau_h, \bbu_h)= 0 \quad & \forall \bftau_h \in \Sigma_h\\
		& (\bdiv \bfsigma_h, \bbv_h) = -(\bbf,\bbv_h)\quad &\forall \bbv_h\in U_h .
		\end{aligned}
	} \right.
	\end{equation}
	
	\section{Stability and convergence analysis}\label{s:theoretical}
	
	Since some results of the analysis follows the guidelines of the theory developed in \cite{ARTIOLI2017155}, in this section we do not provide full details of all the proofs.
	First, for all $E\in \Th$, we introduce the space:
	
	\begin{equation*}
	\widetilde\Sigma(E):=\left\{ \bftau\in H(\bdiv;E)\ : \ \exists \bbw\in \left[H^1(E)\right]^3 \mbox{ \rm such that } \bftau=\C\teps(\bbw) \right\} .
	\end{equation*}
	The global space $\widetilde\Sigma$ is defined as
	\begin{equation*}
	\widetilde\Sigma:=\left\{ \bftau\in H(\bdiv;\Omega)\ : \ \exists \bbw\in  \left[H^1(\O) \right]^3 \mbox{ \rm such that } \bftau=\C\teps(\bbw) \right\} .
	\end{equation*}
	In the sequel, given a measurable subset $A\subseteq \Omega$ and $r > 2$, we will use the following space
	\begin{equation*}
	W^r(A):=\left\{  \bftau  \ : \bftau\in \left[L^r(A)\right]^{3\times 3}_s \ , \   \bdiv\bftau\in \left[L^2(A)\right]^3    \right\} ,
	\end{equation*}
	equipped with the obvious norm.
	%
	%
	\subsection{An interpolation operator for stresses}\label{ss:interpol-oper}
	We introduce the local interpolation operator $\IE : W^r(E)\to  \Sigma_h(E)$, defined by:
	\begin{equation}\label{eq:loc-interp_def}
	\int_{\partial E} (\IE \bftau) \bbn\cdot \bfvarphi_\ast~\df = \int_{\partial E}  (\bftau\,\bbn)\cdot \bfvarphi_\ast~\df \qquad \forall \bfvarphi_\ast\in  R_\ast(\partial E),
	\end{equation}
	where: 
	\begin{equation}\label{eq:Rast}
	\begin{aligned}
	R_\ast(\partial E) = \Big\{
	\bfvarphi_\ast\in\left[ L^2(\partial E)\right]^3 \,: \,
	\bfvarphi_{\ast | f}(\bxt) = \bfgamma_f + & \left[\bfdelta_f\wedge(\bbx(\bxt) -\bbx_E)\right], \\
	& \bfgamma_f,\,\bfdelta_f \in\R^3,  \ \forall f\in\partial E  \Big\}.
	\end{aligned}
	\end{equation}
	We remark that if $\bftau$ is not sufficiently regular, the integral in the right-hand side of \eqref{eq:loc-interp_def} is intended as a duality between $\left[W^{-\frac{1}{r},r}(\partial E)\right]^3$ and $\left[W^{\frac{1}{r},r'}(\partial E)\right]^3$. Instead, if $\bftau$ is a regular function, the above condition is equivalent to require:
	\begin{equation}\label{eq:loc-interp}
	\left\lbrace{
		\begin{aligned}
		&\int_f (\IE \bftau) \bbn\cdot \bfalpha~\df= \int_f  (\bftau\,\bbn) \cdot \bfalpha~\df \quad & \forall \bfalpha\in \R^3;\\
		&\int_f (\IE \bftau) \bbn\cdot \left[\bfomega\wedge(\bbx(\bxt) - \bbx_E)\right]\df = \int_f  (\bftau\,\bbn) \cdot \left[\bfomega\wedge(\bbx(\bxt) - \bbx_E)\right]\df\quad& \forall \bfomega\in \R^3 ;\\
		\end{aligned}
	} \right.
	\end{equation}
	for each face $f\in\partial E$. 
	
	We now show that $\IE\bftau\in\Sigma_h(E)$ is well-defined by conditions~\eqref{eq:loc-interp_def}. Indeed this is an immediate consequence of the following Lemma.
	
	\begin{lem}\label{lm:unisolve}
		If $\bftau_h\in \Sigma_h(E)$ is such that 
		
		\begin{equation*}
		\int_{\partial E}  (\bftau_h\,\bbn)\cdot \bfvarphi_\ast~\df = 0 \qquad \forall \bfvarphi_\ast\in  R_\ast(\partial E),
		\end{equation*}
		then $\bftau_h = \bfzero$.
	\end{lem}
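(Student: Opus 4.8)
The plan is to reduce the single global hypothesis to face-wise conditions, show that these force the normal trace $(\bftau_h\,\bbn)_{|f}$ to vanish on every face, and then conclude $\bftau_h=\bfzero$ by a coercivity argument. First I would note that, since in \eqref{eq:Rast} the vectors $\bfgamma_f$ and $\bfdelta_f$ may be prescribed independently on each face, the assumption is equivalent to the homogeneous version of conditions \eqref{eq:loc-interp}. Picking $\bfvarphi_\ast$ with nonzero coefficients on a single face $f$ and using $\bba\cdot(\bbbb\wedge\bbc)=\bbbb\cdot(\bbc\wedge\bba)$, this yields for every $f\in\partial E$ the two vector identities
\begin{equation*}
\int_f (\bftau_h\,\bbn)~\df = \bfzero, \qquad \int_f (\bbx-\bbx_E)\wedge(\bftau_h\,\bbn)~\df = \bfzero,
\end{equation*}
the first coming from the constant test fields $\bfalpha$ and the second from the rotational fields $\bfomega\wedge(\bbx-\bbx_E)$.

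The central step is to prove that these six scalar conditions are unisolvent on the six-dimensional space $T_h(f)$. Writing $\bfpsi:=(\bftau_h\,\bbn)_{|f}=\bbt_f + a\,[\bbn_f\wedge(\bbx-\bbx_f)] + p_1(\bxt)\,\bbn_f$ as in \eqref{eq:face_approx}, with $p_1=c_1+c_2(\xt-\xt_f)+c_3(\yt-\yt_f)$, I would exploit that $\bbx_f$ is the barycenter of $f$. In the first identity the rotational term and the linear part of $p_1$ integrate to zero, leaving $|f|\,(\bbt_f + c_1\bbn_f)=\bfzero$; splitting into tangential and normal components gives $\bbt_f=\bfzero$ and $c_1=0$. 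Since $\int_f\bfpsi~\df=\bfzero$ is now secured, in the second identity the contribution of $\bbx_E$ drops out and it reduces to $\int_f (\bbx-\bbx_f)\wedge\bfpsi~\df=\bfzero$.

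Using $(\bbx-\bbx_f)\wedge[\bbn_f\wedge(\bbx-\bbx_f)]=|\bbx-\bbx_f|^2\,\bbn_f$ (valid because $\bbx-\bbx_f$ lies in the plane of $f$), the normal component of this last identity reads $a\int_f|\bbx-\bbx_f|^2~\df=0$, whence $a=0$; the two tangential components then form a homogeneous $2\times2$ system in $c_2,c_3$ whose coefficients are the second moments $\int_f(\xt-\xt_f)^2~\df$, $\int_f(\xt-\xt_f)(\yt-\yt_f)~\df$, $\int_f(\yt-\yt_f)^2~\df$ of the face. Denoting by $M$ the associated inertia matrix, which is symmetric positive definite, the system is governed by a rearrangement of $M$ with determinant $-\det M\neq0$, forcing $c_2=c_3=0$. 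Hence $\bfpsi=\bfzero$, i.e. $(\bftau_h\,\bbn)_{|f}=\bfzero$ on every face.

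Finally, with $(\bftau_h\,\bbn)_{|f}=\bfzero$ for all $f$, Proposition~\ref{pr:divFromdofs} gives $\bfalpha_E=\bfzero$ and, since the right-hand side of \eqref{eq:div_dofs_omega} vanishes and its operator is positive definite, $\bfomega_E=\bfzero$; thus $\bdiv\bftau_h=\bfzero$. Recalling from \eqref{eq:local_stress} that $\bftau_h=\C\teps(\bbw^\ast)$ for some $\bbw^\ast\in\left[H^1(E)\right]^3$, so that $\D\bftau_h=\teps(\bbw^\ast)$ with $\bftau_h$ symmetric, integration by parts yields
\begin{equation*}
\int_E \D\bftau_h:\bftau_h~\dEl = \int_E \teps(\bbw^\ast):\bftau_h~\dEl = -\int_E \bbw^\ast\cdot\bdiv\bftau_h~\dEl + \int_{\partial E}(\bftau_h\,\bbn)\cdot\bbw^\ast~\df = 0 ,
\end{equation*}
since both the divergence and the normal trace vanish. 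As $\D$ is symmetric positive definite, this forces $\bftau_h=\bfzero$. I expect the face-wise unisolvence to be the main obstacle: one must separate tangential and normal parts carefully and verify the non-degeneracy of the $2\times2$ moment system, where the barycentric choice of $\bbx_f$ and the positive definiteness of the face inertia matrix are essential.
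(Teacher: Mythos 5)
Your proof is correct and follows essentially the same route as the paper: reduce the hypothesis to six face-wise moment conditions and show they are unisolvent on the six-dimensional space $T_h(f)$, first killing $\bbt_f$ and $c_1$ via the constant tests (using that $\bbx_f$ is the barycenter), then $a$ and $c_2,c_3$ via the rotational tests. Your organization through the vector identity $\int_f(\bbx-\bbx_f)\wedge\bfpsi~\df=\bfzero$ and the face inertia matrix is just a repackaging of the paper's specific choices $\bfdelta_f=\bbn_f$ and $\bfdelta_f=\bbt_f$, and your explicit final integration-by-parts step (zero divergence plus zero normal trace imply $\bftau_h=\bfzero$) spells out what the paper leaves implicit in its appeal to \eqref{eq:local_stress} and Proposition~\ref{pr:divFromdofs}.
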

	
	\begin{proof}
		Recalling \eqref{eq:local_stress}, Proposition \ref{pr:divFromdofs} and \eqref{eq:Rast}, it is sufficient to prove that, given a face $f\in\partial E$, conditions
		\begin{equation}
		\label{eq:unisolve2}
		\int_{f}  (\bftau_h\,\bbn)\cdot \left[\bfgamma_f +  \bfdelta_f\wedge(\bbx(\bxt) -\bbx_E) \right]\df = 0 \qquad \forall \bfgamma_f,\,\bfdelta_f \in\R^3 
		\end{equation}
		imply $(\bftau_h\,\bbn)_{|f} = \bfzero$. To this end, we first set (cf. \eqref{eq:face_approx})
		
		\begin{equation}\label{eq:unisove3}
		(\bftau_h\,\bbn)_{|f}(\bxt) =\,\bbt_{f} + a\big[\bbn_{f}\wedge(\bbx(\bxt) -\bbx_f)\big]+ p_1(\bxt)\bbn_{f},
		\end{equation}
		with $\bbt_{f}$ an arbitrary constant vector tangent to the face $f$, $a\in\R$, $\bbn_{f}$ the outward normal vector and 
		$$
		p_1(\bxt)=c_1+c_2(\xt-\xt_f)+c_3(\yt-\yt_f)\qquad c_i\in\R\quad i=1,2,3.
		$$  
		Choosing $\bfdelta_f=\bfzero$ and $\bfgamma_f$ arbitrary in \eqref{eq:unisolve2}, and considering \eqref{eq:unisove3}, we infer $\bbt_f=\bfzero$ and $c_1=0$. Hence, it holds:
		\begin{equation}\label{eq:unisove4}
		(\bftau_h\,\bbn)_{|f}(\bxt) =\, a\big[\bbn_{f}\wedge(\bbx(\bxt) -\bbx_f)\big]+ \left(c_2(\xt-\xt_f)+c_3(\yt-\yt_f)\right)\bbn_{f}.
		\end{equation}
		We now select $\bfgamma_f=\bfzero$ and $\bfdelta_f=\bbn_f$ in \eqref{eq:unisolve2}. From \eqref{eq:unisove4} we get
		\begin{equation*}
		a\, \int_{f}  \big[\bbn_{f}\wedge(\bbx(\bxt) -\bbx_f)\big]\cdot \big[ \bbn_f\wedge(\bbx(\bxt) -\bbx_E) \big]\df = 0 .
		\end{equation*}
		We have:
		\begin{equation}
		\label{eq:unisolve6}
		\begin{aligned}
		&\int_{f}  \big[\bbn_{f}\wedge(\bbx(\bxt) -\bbx_f)\big]\cdot \big[ \bbn_f\wedge(\bbx(\bxt) -\bbx_E) \big]\df =\\
		&\int_{f}  \big[\bbn_{f}\wedge(\bbx(\bxt) -\bbx_f)\big]\cdot \big[ \bbn_f\wedge((\bbx(\bxt) - \bbx_f) + (\bbx_f-\bbx_E)) \big]\df =\\
		& \int_{f}  \big|\bbn_{f}\wedge(\bbx(\bxt) -\bbx_f)\big|^2\df + \int_{f} \big[\bbn_{f}\wedge(\bbx(\bxt) -\bbx_f)\big]\cdot \big[ \bbn_f\wedge (\bbx_f-\bbx_E) \big]\df =\\
		&\int_{f}  \big|\bbn_{f}\wedge(\bbx(\bxt) -\bbx_f)\big|^2\df > 0 .
		\end{aligned}
		\end{equation}
		Above, we have used that 
		$\bbn_f\wedge (\bbx_f-\bbx_E)$ is a constant vector and that $\bbn_{f}\wedge(\bbx(\bxt) -\bbx_f)$ has zero mean value over the face $f$, to infer that:
		$$
		\int_{f} \big[\bbn_{f}\wedge(\bbx(\bxt) -\bbx_f)\big]\cdot \big[ \bbn_f\wedge (\bbx_f-\bbx_E) \big]\df = 0
		$$
		From \eqref{eq:unisolve6} we deduce $a=0$, and therefore we get:
		\begin{equation}\label{eq:unisolve7}
		(\bftau_h\,\bbn)_{|f}(\bxt) =\,  \left(c_2(\xt-\xt_f)+c_3(\yt-\yt_f)\right)\bbn_{f}.
		\end{equation}
		We finally select $\bfgamma_f=\bfzero$ and $\bfdelta_f=\bbt_f$ in \eqref{eq:unisolve2}, with $\bbt_f$ an arbitrary vector tangential to the face $f$. Thus, we obtain:
		\begin{equation*}
		\int_f  \left(c_2(\xt-\xt_f)+c_3(\yt-\yt_f)\right)\bbn_{f}\cdot\left[\bbt_f \wedge (\bbx(\bxt) - \bbx_E) \right] \df =0 .
		\end{equation*}
		Using again $\bbx(\bxt) - \bbx_E = (\bbx(\bxt) - \bbx_f) + (\bbx_f-\bbx_E)$, we infer
		\begin{equation}\label{eq:unisolve9}
		\begin{aligned}
		&T := \int_f  \left(c_2(\xt-\xt_f)+c_3(\yt-\yt_f)\right)\bbn_{f}\cdot\left[\bbt_f \wedge (\bbx(\bxt) - \bbx_f) \right] \df  =\\ 
		&\int_f \left(c_2(\xt-\xt_f)+c_3(\yt-\yt_f)\right) (\bbx(\bxt) - \bbx_f)\cdot\left[\bbn_{f} \wedge \bbt_f \right] \df = 0 .
		\end{aligned}
		\end{equation}
		We now choose $\bbt_f$ such that the (tangential to the face) vector $\bbn_{f} \wedge \bbt_f$ has components $(c_2,c_3)$ with respect to the local coordinate system $(\xt, \yt)$. Then, from \eqref{eq:unisolve9} we get 
		\begin{equation*}
		T = \int_f \left(c_2^2(\xt-\xt_f)^2+c_3^2(\yt-\yt_f)^2\right) \df = 0 ,
		\end{equation*}
		which implies $c_2=c_3=0$. Therefore, $(\bftau_h\,\bbn)_{|f} = \bfzero$, see \eqref{eq:unisolve7}, and the proof is complete.

	\end{proof}

	The global interpolation operator $\Ih: W^r(\Omega)\to\Sigma_h$ is then defined by glueing the local contributions provided by $\IE$. More precisely, for every $\bftau\in W^r(\O)$ and $E\in\T_h$, we set
	$$(\Ih\bftau)_{|E}:=\IE\bftau_{|E}.$$
	Using the same steps detailed in~\cite{ARTIOLI2017155}, we have the following error estimates for the interpolation operator $\Ih$.
	\begin{prop}\label{pr:approxest} Under assumptions $\mathbf{(A1)}$, $\mathbf{(A2)}$ and $\mathbf{(A3)}$,
		for the interpolation operator $\IE$ defined in \eqref{eq:loc-interp}, the following estimates hold:
		
		\begin{equation*}
		|| \bftau -\IE\bftau||_{0,E}\lesssim h_E |\bftau|_{1,E} \qquad \forall \bftau\in  \widetilde\Sigma(E) \cap \left[H^1(E)\right]^{3\times 3}\coorF{,} 
		\end{equation*}
		and
		\begin{equation*}
		\begin{aligned}
		|| \bdiv(\bftau -\IE\bftau)||_{0,E}  \lesssim h_E |\bdiv\bftau|_{1,E}   \ \
		\forall \bftau\in \widetilde\Sigma(E) \cap \left[H^1(E)\right]^{3\times 3} 
		\mbox{ \rm s.t.  $\bdiv\bftau\in \left[H^1(E)\right]^3$}.
		\end{aligned}
		\end{equation*}	
	\end{prop}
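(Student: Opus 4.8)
The plan is to combine two standard ingredients, exactly along the lines of \cite{ARTIOLI2017155}: the \emph{polynomial consistency} of $\IE$ (it reproduces constant symmetric tensors) and an $L^2$-\emph{stability} bound for $\IE$ carrying the correct power of $h_E$. The two estimates then follow by a Dupont--Scott type argument. Throughout, the hypothesis $\bftau\in\left[H^1(E)\right]^{3\times 3}$ guarantees that the normal trace lies in $\left[L^2(\partial E)\right]^3$, so the moments in \eqref{eq:loc-interp_def} are classical integrals.

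First I would record consistency. If $\bftau_h\in\Sigma_h(E)$, then $\IE\bftau_h$ and $\bftau_h$ share the same moments \eqref{eq:loc-interp_def} against $R_\ast(\partial E)$; since by Lemma~\ref{lm:unisolve} these moments determine an element of $\Sigma_h(E)$ uniquely, we get $\IE\bftau_h=\bftau_h$. In particular, every constant symmetric tensor $\bfpi_0\in\left[\P_0(E)\right]^{3\times 3}_s$ belongs to $\Sigma_h(E)$ (one has $\bfpi_0=\C\teps(\bbw^\ast)$ with $\bbw^\ast$ linear, $\bdiv\bfpi_0=\bfzero\in RM(E)$, and $(\bfpi_0\bbn)_{|f}$ constant on each face, hence in $T_h(f)$), so $\IE\bfpi_0=\bfpi_0$. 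Taking $\bfpi_0$ to be the integral average of $\bftau$ over $E$, this yields $\bftau-\IE\bftau=(I-\IE)(\bftau-\bfpi_0)$ and
\[
\|\bftau-\IE\bftau\|_{0,E}\le\|\bftau-\bfpi_0\|_{0,E}+\|\IE(\bftau-\bfpi_0)\|_{0,E},
\]
where the first term is $\lesssim h_E|\bftau|_{1,E}$ by Poincar\'e--Wirtinger on the star-shaped element (assumption $\mathbf{(A1)}$).

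The crux, and the step I expect to be the main obstacle, is the stability bound $\|\IE\bfeta\|_{0,E}\lesssim \|\bfeta\|_{0,E}+h_E|\bfeta|_{1,E}$, because $\IE\bfeta$ is a \emph{virtual} (non-polynomial) function and cannot be handled explicitly. I would obtain it by scaling $E$ to a normalized element $\widetilde E$ of unit diameter via $\bbx\mapsto(\bbx-\bbx_E)/h_E$: on $\widetilde E$ the space $\Sigma_h(\widetilde E)$ is finite dimensional and, by Lemma~\ref{lm:unisolve}, the moments \eqref{eq:loc-interp} form a unisolvent set of degrees of freedom, so the $L^2(\widetilde E)$-norm of $\IE$ applied to $\widetilde\bfeta$ is equivalent to the Euclidean norm of those moments; each moment is in turn bounded by $\|\widetilde\bfeta\|_{1,\widetilde E}$ through the trace theorem on $\partial\widetilde E$. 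Undoing the scaling gives the claimed bound, the two summands being the rescaled $L^2$- and seminorm contributions. Applying this with $\bfeta=\bftau-\bfpi_0$, and using $|\bftau-\bfpi_0|_{1,E}=|\bftau|_{1,E}$ together with $\|\bftau-\bfpi_0\|_{0,E}\lesssim h_E|\bftau|_{1,E}$, closes the first estimate. The delicate point is that the norm-equivalence and trace constants must be \emph{uniform} over the mesh; this is precisely where $\mathbf{(A1)}$--$\mathbf{(A3)}$ enter, ensuring that the rescaled elements $\widetilde E$ stay uniformly star-shaped so that all constants depend only on $\gamma$.

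For the divergence estimate I would argue directly via Proposition~\ref{pr:divFromdofs}. By the interpolation conditions \eqref{eq:loc-interp}, the normal-trace moments of $\IE\bftau$ against constants and against $\bfomega\wedge(\bbx-\bbx_E)$ coincide with those of $\bftau$; inserting this into the integration-by-parts identity \eqref{eq:compat} yields, for every $\bbr\in RM(E)$,
\[
\int_E\bdiv(\IE\bftau)\cdot\bbr~\dEl=\int_{\partial E}(\IE\bftau)\bbn\cdot\bbr~\df=\int_{\partial E}(\bftau\,\bbn)\cdot\bbr~\df=\int_E\bdiv\bftau\cdot\bbr~\dEl .
\]
Since $\bdiv(\IE\bftau)\in RM(E)$, this identifies $\bdiv(\IE\bftau)$ with the $L^2(E)$-orthogonal projection of $\bdiv\bftau$ onto $RM(E)$. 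As $\left[\P_0(E)\right]^3\subset RM(E)$, the projection error is no larger than the best constant approximation of $\bdiv\bftau$, so Poincar\'e on the star-shaped element gives $\|\bdiv(\bftau-\IE\bftau)\|_{0,E}\lesssim h_E|\bdiv\bftau|_{1,E}$. The global bounds for $\Ih$ then follow by summing the local estimates over $E\in\Th$.
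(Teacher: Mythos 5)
The paper itself gives no proof of this proposition --- it defers entirely to the two-dimensional reference \cite{ARTIOLI2017155} --- so your proposal is being compared against that standard argument, and it reconstructs it faithfully: polynomial consistency ($\IE\bfpi_0=\bfpi_0$ for constant symmetric tensors, correctly verified since $\bfpi_0=\C\teps(\bbw^\ast)$ with $\bbw^\ast$ linear, $\bdiv\bfpi_0=\bfzero$ and $(\bfpi_0\bbn)_{|f}\in T_h(f)$), an $L^2$-stability bound for $\IE$, and a Bramble--Hilbert/Poincar\'e step on the star-shaped element. Your treatment of the divergence estimate is clean and is exactly the paper's own Proposition~\ref{pr:diagramCommuatative}: $\bdiv(\IE\bftau)=\Pi_{RM}(\bdiv\bftau)$, combined with $\left[\P_0(E)\right]^3\subset RM(E)$ and best-approximation in $L^2$.

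The one soft spot is the mechanism you invoke for the stability bound $\|\IE\bfeta\|_{0,E}\lesssim\|\bfeta\|_{0,E}+h_E|\bfeta|_{1,E}$. Finite-dimensionality of $\Sigma_h(\widetilde E)$ plus unisolvence (Lemma~\ref{lm:unisolve}) gives, on each \emph{fixed} scaled element, equivalence between the $L^2$-norm and the Euclidean norm of the moments --- but with a constant that a priori depends on the shape of $\widetilde E$, and the rescaled elements form an infinite family, so this argument alone does not deliver the uniformity you need (you flag this, but do not close it). The way this is actually closed in \cite{ARTIOLI2017155} is not by compactness of shapes but by exploiting the definition of $\Sigma_h(E)$: an element $\bftau_h=\C\teps(\bbw^\ast)$ with prescribed normal trace and divergence is the solution of a local Neumann elasticity problem, and the a priori estimate for that problem --- resting on Korn, trace and Poincar\'e inequalities whose constants depend only on the star-shapedness parameter $\gamma$ of $\mathbf{(A1)}$--$\mathbf{(A3)}$ --- bounds $\|\bftau_h\|_{0,E}$ by the data, i.e.\ by the moments of $\bfeta\bbn$ on $\partial E$, uniformly in $E$. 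With that substitution your argument is complete; without it, the step ``norm equivalence with uniform constants'' is asserted rather than proved.
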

	
	%
	%
	
	\subsection{The {\em ellipticity-on-the-kernel} and {\em inf-sup} conditions}\label{ss:elker_infsup}
	
	The proposed approach satisfies the compatibility conditions.
	First, we notice that (see \eqref{eq:global-stress}, \eqref{eq:local_stress} and \eqref{eq:global-displ}, \eqref{eq:local_displ}):
	
	\begin{equation}\label{eq:kern-incl}
	\bdiv(\Sigma_h)\subseteq U_h .
	\end{equation}
	Then, we introduce the discrete kernel $K_h\subseteq \Sigma_h$:
	\begin{equation*}
	K_h =\{ \bftau_h\in\Sigma_h\, :\, (\bdiv \bftau_h,\bbv_h)=0 \quad \forall \bbv_h\in U_h  \},
	\end{equation*}
	and we infer from \eqref{eq:kern-incl} that $\bftau_h\in K_h$ implies $\bdiv \bftau_h=\bfzero$. Hence, it holds:
	
	\begin{equation}\label{eq:l2-hdiv}
	|| \bftau_h ||_\Sigma = ||\bftau_h ||_0\qquad \forall \bftau_h\in K_h .
	\end{equation}
	This is essentially the property that leads to the following {\em ellipticity-on-the-kernel} condition.
	
	\begin{prop}\label{pr:elker}
		For the method described in Section \ref{s:HR-VEM}, there exists a constant $\alpha >0$ such that
		
		\begin{equation}\label{eq:elker}
		a_h(\bftau_h,\bftau_h)\ge \alpha\, || \bftau_h||^2_\Sigma\qquad \forall \bftau_h\in K_h .
		\end{equation}
		
	\end{prop}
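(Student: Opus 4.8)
The plan is to reduce the global inequality \eqref{eq:elker} to a local $L^2$-bound on each element and then establish that bound through the standard VEM norm-equivalence machinery. First I would exploit \eqref{eq:l2-hdiv}: since every $\bftau_h\in K_h$ satisfies $\bdiv\bftau_h=\bfzero$, we have $\|\bftau_h\|_\Sigma=\|\bftau_h\|_0$, so it suffices to prove $a_h(\bftau_h,\bftau_h)\gtrsim\|\bftau_h\|_0^2$. Because $a_h=\sum_{E}a_E^h$ and $\|\bftau_h\|_0^2=\sum_E\|\bftau_h\|_{0,E}^2$, the whole statement follows from a local estimate
\begin{equation*}
a_E^h(\bftau_h,\bftau_h)\gtrsim \|\bftau_h\|_{0,E}^2\qquad\forall\,\bftau_h\in\Sigma_h(E),
\end{equation*}
with a hidden constant independent of $E$ and $h$. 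Observe that this local bound involves only the $L^2$-norm (not the full $\Sigma$-norm), hence it can be proved on all of $\Sigma_h(E)$, and the constraint $\bdiv\bftau_h=\bfzero$ is used only in the final summation.

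Next I would split $a_E^h$ according to its definition \eqref{eq:ah1}. The consistency part is controlled by the coercivity of $\D$: there is $d_\ast>0$ with $a_E(\Pi_E\bftau_h,\Pi_E\bftau_h)=\int_E \D(\Pi_E\bftau_h):(\Pi_E\bftau_h)\,\dEl\ge d_\ast\|\Pi_E\bftau_h\|_{0,E}^2$, while the stabilization \eqref{eq:stab1} is a nonnegative quadratic form. A useful remark is that, by \eqref{eq:proj}, $\Pi_E$ is exactly the $L^2(E)$-orthogonal projection onto $[\P_0(E)]^{3\times3}_s$; hence $\Pi_E\bftau_h$ and $(I-\Pi_E)\bftau_h$ are $L^2(E)$-orthogonal and the Pythagorean identity $\|\bftau_h\|_{0,E}^2=\|\Pi_E\bftau_h\|_{0,E}^2+\|(I-\Pi_E)\bftau_h\|_{0,E}^2$ holds. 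Therefore the desired local estimate reduces to showing that the stabilization controls the non-constant part, i.e.
\begin{equation*}
\|(I-\Pi_E)\bftau_h\|_{0,E}^2\ \lesssim\ s_E\big((I-\Pi_E)\bftau_h,(I-\Pi_E)\bftau_h\big).
\end{equation*}

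Writing $\bfeta_h:=(I-\Pi_E)\bftau_h$, which again belongs to $\Sigma_h(E)$ since constant symmetric tensors lie in $\Sigma_h(E)$, the remaining task is the inequality
\begin{equation*}
\|\bfeta_h\|_{0,E}^2\ \lesssim\ \kappa_E\, h_E\int_{\partial E}|\bfeta_h\bbn|^2\,\df .
\end{equation*}
This is the heart of the argument and the step I expect to be the main obstacle. Its validity rests on the fact that, by Proposition \ref{pr:divFromdofs} and Lemma \ref{lm:unisolve}, any element of $\Sigma_h(E)$ is uniquely and stably determined by its boundary traces $\{(\bftau_h\bbn)_{|f}\}_{f\in\partial E}$; thus the scaled boundary functional on the right is a genuine norm on the finite-dimensional space $\Sigma_h(E)$, equivalent to the interior $L^2$-norm. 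To obtain constants uniform over the mesh family I would rescale $E$ to a reference configuration of unit diameter and invoke assumptions $\mathbf{(A1)}$--$\mathbf{(A3)}$: both sides scale like $h_E^3$ times the square of pointwise stress values, the factor $\kappa_E$ (comparable to a norm of $\D_{|E}$) absorbing the material dependence, so the shape-regularity bound $\gamma$ and the spectral bounds on $\D$ yield a constant independent of $h_E$. The genuine difficulty is that $\Sigma_h(E)$ is defined implicitly through the elasticity operator $\bftau_h=\C\teps(\bbw^\ast)$ rather than as an explicit polynomial space, so the equivalence is not a mere inverse inequality; I would follow the scaling and trace arguments developed in \cite{ARTIOLI2017155} to make it rigorous. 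Combining this with the coercivity of $\D$ gives $a_E^h(\bftau_h,\bftau_h)\gtrsim\|\bftau_h\|_{0,E}^2$, and summing over $E$ together with \eqref{eq:l2-hdiv} completes the proof.
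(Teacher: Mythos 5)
Your proposal is correct and follows essentially the same route as the paper: the paper's proof also reduces to the local two-sided equivalence $\|\bftau_h\|_{0,E}^2\lesssim a_E^h(\bftau_h,\bftau_h)$ on $\Sigma_h(E)$ (justified by citing the standard VEM stability techniques), sums over elements, and invokes \eqref{eq:l2-hdiv} to pass from the $L^2$-norm to the $\Sigma$-norm on $K_h$. You simply fill in more of the standard details (orthogonality of $\Pi_E$, coercivity of $\D$, and the boundary-trace norm equivalence), correctly identifying the uniform stabilization bound as the one step that genuinely requires the scaling arguments the paper delegates to its references.
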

	
	\begin{proof}
		Fix $E\in\Th$. By \eqref{eq:proj}, \eqref{eq:ah1} and \eqref{eq:stab1}, using the techniques of \cite{volley,BFMXX}, one has:
		
		\begin{equation*}
		||\bftau_h||_{0,E}^2\lesssim a_E^h(\bftau_h,\bftau_h)\lesssim ||\bftau_h||_{0,E}^2\qquad \forall \bftau_h\in\Sigma_h(E).
		\end{equation*}
		By recalling \eqref{eq:global-ah}, we get the existence of $\alpha>0$ such that
		
		\begin{equation*}
		a_h(\bftau_h,\bftau_h)\ge \alpha\, || \bftau_h||^2_0\qquad \forall \bftau_h\in \Sigma_h .
		\end{equation*}
		Estimate \eqref{eq:elker} now follows by recalling \eqref{eq:l2-hdiv}.
	\end{proof}
	For the discrete {\em inf-sup} condition, we need the following {\em commuting diagram property}.
	\begin{prop}\label{pr:diagramCommuatative}
		For the operator $\Ih: W^r(\O)\rightarrow\Sigma_h$ it holds:
		
		\begin{equation}\label{eq:diagram_commutative}
		\bdiv (\Ih\bftau)= \Pi_{RM}( \bdiv \bftau) \qquad \forall \bftau \in W^r(\O),
		\end{equation}
		where $\Pi_{RM}$ denotes the $L^2$-projection operator onto the space of the rigid body motions, see \eqref{eq:rigid}.
	\end{prop}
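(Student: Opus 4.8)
The plan is to establish the identity locally on each element and then assemble. Because $\Ih$ and $\bdiv$ are defined element by element, and the space $U_h$ carries no inter-element constraints, the global projection $\Pi_{RM}$ acts as the $L^2(E)$-projection $\Pi_{RM}^E$ onto $RM(E)$ separately on each $E$. Hence it suffices to prove $\bdiv(\IE\bftau)=\Pi_{RM}^E(\bdiv\bftau)$ for every $E\in\Th$. By Proposition~\ref{pr:divFromdofs} we have $\bdiv(\IE\bftau)\in RM(E)$, and $\Pi_{RM}^E(\bdiv\bftau)\in RM(E)$ by construction. Since the $L^2(E)$-inner product restricts to an inner product on $RM(E)$, two elements of $RM(E)$ coincide as soon as they produce the same pairings against all of $RM(E)$; thus I only need
\[ \int_E \bdiv(\IE\bftau)\cdot\bbr~\dEl=\int_E \bdiv\bftau\cdot\bbr~\dEl\qquad\forall\,\bbr\in RM(E), \]
the right-hand side being exactly the defining relation of $\Pi_{RM}^E(\bdiv\bftau)$.

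Next I would transform both sides by integration by parts. For the interpolant, $\IE\bftau\in\Sigma_h(E)$, so the Green formula \eqref{eq:compat} already used in the proof of Proposition~\ref{pr:divFromdofs} gives $\int_E \bdiv(\IE\bftau)\cdot\bbr~\dEl=\int_{\partial E}\big((\IE\bftau)\bbn\big)\cdot\bbr~\df$ for all $\bbr\in RM(E)$. For $\bftau$ itself I use the analogous formula; here it is crucial that every $\bbr\in RM(E)$ satisfies $\teps(\bbr)=\bfzero$, so that, $\bftau$ being symmetric, the bulk term $\int_E\bftau:\teps(\bbr)$ disappears and only the boundary contribution survives, namely $\int_E \bdiv\bftau\cdot\bbr~\dEl=\int_{\partial E}(\bftau\,\bbn)\cdot\bbr~\df$. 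The target equality therefore reduces to
\[ \int_{\partial E}\big((\IE\bftau)\bbn\big)\cdot\bbr~\df=\int_{\partial E}(\bftau\,\bbn)\cdot\bbr~\df\qquad\forall\,\bbr\in RM(E). \]

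The decisive point is that the boundary trace of every rigid body motion lies in the test space $R_\ast(\partial E)$ of \eqref{eq:Rast}. Writing $\bbr(\bbx)=\bfalpha+\bfomega\wedge(\bbx-\bbx_E)$ as in \eqref{eq:rigid}, its restriction to each face $f$ is of the admissible form $\bfgamma_f+\bfdelta_f\wedge(\bbx(\bxt)-\bbx_E)$ with the same $\bfgamma_f=\bfalpha$ and $\bfdelta_f=\bfomega$ chosen on all faces. Hence $\bbr_{|\partial E}\in R_\ast(\partial E)$, and the boundary equality above is precisely the defining condition \eqref{eq:loc-interp_def} of $\IE$. This closes the local identity; summing over $E\in\Th$ then yields \eqref{eq:diagram_commutative}.

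Once the inclusion $RM(E)_{|\partial E}\subseteq R_\ast(\partial E)$ is noticed, the argument is essentially bookkeeping, and the only genuine subtlety — which I regard as the main obstacle — is the low regularity of a general $\bftau\in W^r(\O)$. For such $\bftau$ the normal trace $\bftau\,\bbn$ is not in $L^2(\partial E)$ but in $\left[W^{-1/r,r}(\partial E)\right]^3$, so the Green formula for $\bftau$ and the interpolation conditions must be interpreted through the duality pairing between $\left[W^{-1/r,r}(\partial E)\right]^3$ and $\left[W^{1/r,r'}(\partial E)\right]^3$ already adopted in \eqref{eq:loc-interp_def}. Since rigid body motions are smooth, they are legitimate elements of the dual space $\left[W^{1/r,r'}(\partial E)\right]^3$, all the pairings above are well defined, and the manipulations remain valid verbatim.
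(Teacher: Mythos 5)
Your proof is correct and follows essentially the same route as the paper's: localize to a single element, test $\bdiv(\IE\bftau)$ and $\bdiv\bftau$ against an arbitrary $\bbr\in RM(E)$, integrate by parts on both sides (the bulk terms vanishing since $\teps(\bbr)=\bfzero$), and invoke the defining condition \eqref{eq:loc-interp_def} of $\IE$ after observing that $\bbr_{|\partial E}\in R_\ast(\partial E)$. You are in fact more explicit than the paper on the two points it leaves implicit --- the inclusion $RM(E)_{|\partial E}\subseteq R_\ast(\partial E)$ and the duality interpretation of the boundary pairings for $\bftau\in W^r$ --- which is a welcome addition rather than a deviation.
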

	%
	%
	\begin{proof}
		It is sufficient to prove property \eqref{eq:diagram_commutative} locally, in each element $E \in\T_h$. 
		Fix now $\bbr\in RM(E)$ and $\bftau\in W^r(E)$. We have:
		\begin{equation}\label{eq:diagram1}
		\begin{aligned}
		\int_E \bdiv \bftau \cdot \bbr~\dEl &= \int_{\partial E}( \bftau\bbn ) \cdot \bbr~\df \quad &(\text{by \eqref{eq:loc-interp}})\\
		&= \int_{\partial E} (\IE\bftau)\bbn  \cdot \bbr~\df \quad& (\text{integration by parts})\\
		&= \int_E \bdiv (\IE\bftau) \cdot \bbr~\dEl 
		\end{aligned}
		\end{equation}	
		From \eqref{eq:diagram1} and the definition of $L^2$-projection operator, we get $\bdiv(\IE\bftau)=\Pi_{RM}(\bdiv \bftau)$ on $E$.
	\end{proof}
	Using Proposition \ref{pr:diagramCommuatative} the following discrete {\em inf-sup} condition follows from the theory developed in~\cite{ARTIOLI2017155}.
	\begin{prop}\label{pr:inf-sup} Suppose that  assumptions $\mathbf{(A1)}$, $\mathbf{(A2)}$ and $\mathbf{(A3)}$ are fulfilled. Then, there exists $\beta>0$ such that
		
		\begin{equation*}
		\sup_{\bftau_h\in \Sigma_h}\frac{(\bdiv \bftau_h,\bbv_h)}{|| \bftau_h||_{\Sigma}}\ge \beta ||\bbv_h||_{U}\qquad \forall\, \bbv_h\in U_h .
		\end{equation*}
		
	\end{prop}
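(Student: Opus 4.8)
The plan is to prove the discrete \emph{inf-sup} condition by a \emph{Fortin-type} argument: given an arbitrary $\bbv_h\in U_h$, I would construct a stress field $\bftau_h\in\Sigma_h$ whose divergence reproduces $\bbv_h$ exactly and whose $\Sigma$-norm is controlled by $\|\bbv_h\|_U$, with all constants independent of the meshsize. The two pillars are the (mesh-independent) continuous stability of the Hellinger--Reissner problem \eqref{cont-pbl}, which furnishes a continuous symmetric stress with prescribed divergence, and the commuting diagram property of Proposition \ref{pr:diagramCommuatative}, which transfers that stress to $\Sigma_h$ via the interpolation operator $\Ih$ without spoiling its divergence.

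First, fix $\bbv_h\in U_h$ and solve the auxiliary continuous elasticity problem: find $\bbw\in\left[H_0^1(\O)\right]^3$ such that $\bdiv\big(\C\teps(\bbw)\big)=\bbv_h$ in $\O$, and set $\bftau:=\C\teps(\bbw)$. By construction $\bftau$ is symmetric, lies in $H(\bdiv;\O)$, and satisfies $\bdiv\bftau=\bbv_h\in\left[L^2(\O)\right]^3$, with $\|\bbw\|_{1,\O}\lesssim\|\bbv_h\|_U$. Standard elliptic regularity for the elasticity system on the polyhedral domain $\O$ yields $\bbw\in\left[H^{1+s}(\O)\right]^3$ for some $s>0$; by the three-dimensional Sobolev embedding this gives $\teps(\bbw)\in\left[L^r(\O)\right]^{3\times 3}_s$, and hence $\bftau\in\left[L^r(\O)\right]^{3\times 3}_s$, for some $r>2$. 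Therefore $\bftau\in W^r(\O)$ with the stability bound $\|\bftau\|_{W^r(\O)}\lesssim\|\bbv_h\|_U$, the constant depending only on $\O$ and $\D$, and in particular independent of $h$.

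Next I would set $\bftau_h:=\Ih\bftau\in\Sigma_h$. The commuting diagram property \eqref{eq:diagram_commutative} gives $\bdiv\bftau_h=\Pi_{RM}(\bdiv\bftau)=\Pi_{RM}(\bbv_h)$; since $\bbv_h\in U_h$ is elementwise a rigid body motion (see \eqref{eq:rigid} and \eqref{eq:global-displ}), the $L^2$-projection $\Pi_{RM}$ acts as the identity on $\bbv_h$, so that $\bdiv\bftau_h=\bbv_h$ and consequently $(\bdiv\bftau_h,\bbv_h)=\|\bbv_h\|_U^2$. For the denominator, the definition of the $\Sigma$-norm gives $\|\bftau_h\|_\Sigma^2=\|\Ih\bftau\|_0^2+\|\bdiv\bftau_h\|_0^2=\|\Ih\bftau\|_0^2+\|\bbv_h\|_U^2$, while the uniform $L^2$-continuity of $\Ih$ (the scaling and trace arguments underlying Proposition \ref{pr:approxest}) yields $\|\Ih\bftau\|_0\lesssim\|\bftau\|_{W^r(\O)}\lesssim\|\bbv_h\|_U$. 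Hence $\|\bftau_h\|_\Sigma\le C\,\|\bbv_h\|_U$, and
\[
\sup_{\bfsigma_h\in\Sigma_h}\frac{(\bdiv\bfsigma_h,\bbv_h)}{\|\bfsigma_h\|_\Sigma}
\ge\frac{(\bdiv\bftau_h,\bbv_h)}{\|\bftau_h\|_\Sigma}
\ge\frac{\|\bbv_h\|_U^2}{C\,\|\bbv_h\|_U}
=\beta\,\|\bbv_h\|_U ,
\]
with $\beta:=1/C$, which is the claimed estimate.

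The step I expect to be the genuine obstacle is the uniform-in-$h$ $L^2$-stability of the interpolation operator, namely $\|\Ih\bftau\|_0\lesssim\|\bftau\|_{W^r(\O)}$ with a constant independent of the meshsize. Because $\IE$ is defined only implicitly through the boundary duality pairings \eqref{eq:loc-interp_def}, establishing this bound requires a careful local scaling argument on each $E$ together with trace inequalities, in which the mesh-regularity assumptions $\mathbf{(A1)}$--$\mathbf{(A3)}$ are essential to keep the constants uniform; this is precisely the technical core inherited from \cite{ARTIOLI2017155}. A secondary and much milder point is securing enough elliptic regularity to place $\bftau$ in $W^r(\O)$ for some $r>2$, which, by the embedding discussed above, needs only $H^{1+s}$ regularity of $\bbw$ for an arbitrarily small $s>0$.
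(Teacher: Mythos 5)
Your proposal is correct and follows essentially the same route as the paper, which for this proposition gives no self-contained argument but invokes the commuting diagram property of Proposition \ref{pr:diagramCommuatative} together with the theory of \cite{ARTIOLI2017155} --- and that theory is precisely the Fortin construction you spell out (auxiliary elasticity problem with datum $\bbv_h$, elliptic regularity to place the resulting stress in $W^r(\Omega)$ with $r>2$ so that $\Ih$ applies, and the identity $\Pi_{RM}\bbv_h=\bbv_h$). You have also correctly isolated the genuine technical core, namely the $h$-uniform $L^2$-boundedness of $\Ih$ under $\mathbf{(A1)}$--$\mathbf{(A3)}$, which is exactly what the paper inherits from the cited reference.
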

	
	\subsection{Error estimates}\label{ss:errest}
	We denote with $\P_0(\Th)$ the space of piecewise constant functions with respect to the given mesh $\Th$. Using the techniques developed in \cite{ARTIOLI2017155}, one can prove the following result.
	\begin{prop}\label{pr:error-est} Suppose that  assumptions $\mathbf{(A1)}$, $\mathbf{(A2)}$ and $\mathbf{(A3)}$ are fulfilled.
		For every $(\bfsigma_I,\bbu_I)\in\Sigma_h\times U_h$ and every $\bfsigma_\pi\in \left[\P_0(\Th)\right]^{3\times 3}_s$, the following error equation holds:
		\begin{equation*}
		|| \bfsigma- \bfsigma_h||_\Sigma + || \bbu - \bbu_h ||_U \lesssim || \bfsigma- \bfsigma_I||_\Sigma + || \bbu - \bbu_I ||_U  + h\,||\bdiv\bfsigma_I||_{0,\O} + || \bfsigma- \bfsigma_\pi||_{0,\O}
		.
		\end{equation*}
	\end{prop}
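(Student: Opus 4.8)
The plan is to run the standard error analysis for a mixed problem affected by a \emph{variational crime}, exploiting that the discretization is conforming, i.e. $\Sigma_h\subseteq\Sigma$ and $U_h\subseteq U$, so that the only source of inconsistency is the replacement of $a(\cdot,\cdot)$ by $a_h(\cdot,\cdot)$. I fix arbitrary $(\bfsigma_I,\bbu_I)\in\Sigma_h\times U_h$ and $\bfsigma_\pi\in\left[\P_0(\Th)\right]^{3\times 3}_s$, and set $\bfdelta_h:=\bfsigma_h-\bfsigma_I\in\Sigma_h$ and $\bbe_h:=\bbu_h-\bbu_I\in U_h$. Since $\|\bfsigma-\bfsigma_h\|_\Sigma\le\|\bfsigma-\bfsigma_I\|_\Sigma+\|\bfdelta_h\|_\Sigma$ and $\|\bbu-\bbu_h\|_U\le\|\bbu-\bbu_I\|_U+\|\bbe_h\|_U$, it suffices to bound $\|\bfdelta_h\|_\Sigma$ and $\|\bbe_h\|_U$ by the right-hand side of the claimed estimate.

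The first ingredient is the polynomial consistency of the discrete form: for every $\bftau_h\in\Sigma_h$ one has $a_h(\bfsigma_\pi,\bftau_h)=a(\bfsigma_\pi,\bftau_h)$. Indeed, on each $E$ the projector reproduces constant symmetric tensors and annihilates them in the stabilization, and a one-line computation based on the symmetry of $\D$ and the defining property \eqref{eq:proj} of $\Pi_E$ gives the identity. Combined with the two-sided bound $\|\bftau_h\|^2_{0,E}\lesssim a_E^h(\bftau_h,\bftau_h)\lesssim\|\bftau_h\|^2_{0,E}$ established within Proposition~\ref{pr:elker} — which makes $a_h$ continuous in the $\|\cdot\|_0$ norm — I subtract the discrete scheme \eqref{eq:discr-pbl-ls} from \eqref{cont-pbl} tested on $(\bftau_h,\bbv_h)\in\Sigma_h\times U_h$ and insert $\bfsigma_\pi$ through this consistency. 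This produces the error relation
\begin{equation*}
a_h(\bfdelta_h,\bftau_h)+(\bdiv\bftau_h,\bbe_h)=a(\bfsigma-\bfsigma_\pi,\bftau_h)+a_h(\bfsigma_\pi-\bfsigma_I,\bftau_h)+(\bdiv\bftau_h,\bbu-\bbu_I)\qquad\forall\bftau_h\in\Sigma_h,
\end{equation*}
together with $(\bdiv\bfdelta_h,\bbv_h)=(\bdiv(\bfsigma-\bfsigma_I),\bbv_h)$ for all $\bbv_h\in U_h$.

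To estimate $\bfdelta_h$ I first control its divergence: since $\bdiv\Sigma_h\subseteq U_h$ (see \eqref{eq:kern-incl}), the choice $\bbv_h=\bdiv\bfdelta_h$ in the second relation yields $\|\bdiv\bfdelta_h\|_0\le\|\bdiv(\bfsigma-\bfsigma_I)\|_0\le\|\bfsigma-\bfsigma_I\|_\Sigma$. Next, using the inf-sup property of Proposition~\ref{pr:inf-sup} together with $\bdiv\Sigma_h\subseteq U_h$, I pick a lift $\bfxi_h\in\Sigma_h$ with $\bdiv\bfxi_h=\bdiv\bfdelta_h$ and $\|\bfxi_h\|_\Sigma\lesssim\|\bdiv\bfdelta_h\|_0$, so that $\bfdelta_h-\bfxi_h$ lies in the kernel $K_h$. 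Testing the first error relation with the divergence-free field $\bftau_h=\bfdelta_h-\bfxi_h$ cancels the two terms carrying $\bdiv\bftau_h$, and the ellipticity-on-the-kernel estimate \eqref{eq:elker} (recalling \eqref{eq:l2-hdiv}) gives, after bounding the right-hand side by continuity of $a$ and $a_h$, the inequality $\|\bfdelta_h-\bfxi_h\|_\Sigma\lesssim\|\bfxi_h\|_\Sigma+\|\bfsigma-\bfsigma_\pi\|_0+\|\bfsigma_\pi-\bfsigma_I\|_0$. Collecting the two contributions and using $\|\bfsigma_\pi-\bfsigma_I\|_0\le\|\bfsigma-\bfsigma_\pi\|_0+\|\bfsigma-\bfsigma_I\|_\Sigma$ then bounds $\|\bfdelta_h\|_\Sigma$ by $\|\bfsigma-\bfsigma_I\|_\Sigma+\|\bfsigma-\bfsigma_\pi\|_0$.

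Finally, I recover $\bbe_h$ from the inf-sup condition: isolating $(\bdiv\bftau_h,\bbe_h)$ in the first error relation and estimating each remaining term by continuity gives $\|\bbe_h\|_U\lesssim\|\bfdelta_h\|_0+\|\bfsigma-\bfsigma_\pi\|_0+\|\bfsigma_\pi-\bfsigma_I\|_0+\|\bbu-\bbu_I\|_U$, all already controlled, and the triangle inequality yields the assertion. This chain in fact bounds the error by $\|\bfsigma-\bfsigma_I\|_\Sigma+\|\bbu-\bbu_I\|_U+\|\bfsigma-\bfsigma_\pi\|_0$, which \emph{a fortiori} implies the stated inequality, the additional $h\,\|\bdiv\bfsigma_I\|_0$ term being harmless and retained only to match a less sharp treatment of the divergence balance through the commuting property of Proposition~\ref{pr:diagramCommuatative}. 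I expect the main difficulty to be precisely this consistency bookkeeping — inserting $\bfsigma_\pi$ at exactly the right places so that the sole surviving inconsistency reduces to the best piecewise-constant approximation $\|\bfsigma-\bfsigma_\pi\|_0$ — while the rest relies only on $\bdiv\Sigma_h\subseteq U_h$ and Propositions~\ref{pr:elker} and~\ref{pr:inf-sup}.
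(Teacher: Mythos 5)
Your argument is correct and is exactly the standard mixed-method analysis that the paper itself does not reproduce but delegates to \cite{ARTIOLI2017155}: error equations obtained by conformity, insertion of $\bfsigma_\pi$ through the polynomial consistency $a_h(\bfsigma_\pi,\bftau_h)=a(\bfsigma_\pi,\bftau_h)$, control of $\bdiv(\bfsigma_h-\bfsigma_I)$ via $\bdiv\Sigma_h\subseteq U_h$, a discrete lifting from the inf-sup condition, ellipticity on the kernel, and recovery of the displacement error again by inf-sup. One remark on your closing speculation: the extra term $h\,\|\bdiv\bfsigma_I\|_{0,\O}$ is not merely a stylistic leftover; it is the footprint of the continuity bound actually available for $a_h$ when the upper estimate for the stabilization is proved in the weighted form $a_E^h(\bftau_h,\bftau_h)\lesssim \|\bftau_h\|_{0,E}^2+h_E^2\|\bdiv\bftau_h\|_{0,E}^2$ (as in the 2D reference), in which case bounding $a_h(\bfsigma_\pi-\bfsigma_I,\cdot)$ produces $h\,\|\bdiv(\bfsigma_\pi-\bfsigma_I)\|_{0,\O}=h\,\|\bdiv\bfsigma_I\|_{0,\O}$ since $\bdiv\bfsigma_\pi=\bfzero$. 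Your sharper conclusion rests on the purely $L^2$ two-sided bound stated in the proof of Proposition~\ref{pr:elker}, which is legitimate to invoke here; either route yields the claimed estimate.
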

	A suitable choice of $\bbu_I$, $\bfsigma_I$, and $\bfsigma_\pi$ leads to the following error estimate, see \cite{ARTIOLI2017155}.
	\begin{thm}\label{th:main_convergence}
		Let $(\bfsigma,\bbu)\in\Sigma\times U$ be the solution of Problem \eqref{cont-pbl}, and let $(\bfsigma_h,\bbu_h)\in\Sigma_h\times U_h$ be the solution of the discrete problem \eqref{eq:discr-pbl-ls}. Suppose that  assumptions $\mathbf{(A1)}$, $\mathbf{(A2)}$ and $\mathbf{(A3)}$ are fulfilled.
		Assuming $\bfsigma_{|E}\in \left[H^1(E)\right]^{3\times 3}$ and $(\bdiv\, \bfsigma)_{|E}\in \left[H^1(E)\right]^3$, the following estimate holds true:
		\begin{equation*}
		|| \bfsigma - \bfsigma_h||_{\Sigma} + || \bbu - \bbu_h||_U \lesssim C(\Omega,\bfsigma,\bbu)\, h ,
		\end{equation*}
		where $C(\Omega,\bfsigma,\bbu)$ is independent of $h$ but depends on the domain $\Omega$ and on the Sobolev regularity of $\bfsigma$ and $\bbu$.
	\end{thm}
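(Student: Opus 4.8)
The plan is to specialize the abstract error equation of Proposition~\ref{pr:error-est} to judicious choices of the interpolants $\bfsigma_I$, $\bbu_I$ and of the piecewise-constant tensor $\bfsigma_\pi$, and then to bound each of the four resulting terms by the approximation results already established. First I would set $\bfsigma_I:=\Ih\bfsigma$; this is legitimate because $\bfsigma_{|E}\in[H^1(E)]^{3\times 3}$ embeds into $[L^r(E)]^{3\times 3}$ for some $r>2$ by the three-dimensional Sobolev embedding, so $\bfsigma\in W^r(\O)$ and $\Ih\bfsigma$ is well defined. I would take $\bbu_I$ to be the $L^2$-projection of $\bbu$ onto $U_h$ (which coincides element-wise with the operator $\Pi_{RM}$), and choose $\bfsigma_\pi$ to be the $L^2$-projection of $\bfsigma$ onto $[\P_0(\Th)]^{3\times 3}_s$.

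For the first term I would split $||\bfsigma-\Ih\bfsigma||_\Sigma^2=||\bfsigma-\Ih\bfsigma||_0^2+||\bdiv(\bfsigma-\Ih\bfsigma)||_0^2$ and apply both estimates of Proposition~\ref{pr:approxest} element by element, namely $||\bfsigma-\IE\bfsigma||_{0,E}\lesssim h_E|\bfsigma|_{1,E}$ and $||\bdiv(\bfsigma-\IE\bfsigma)||_{0,E}\lesssim h_E|\bdiv\bfsigma|_{1,E}$; summing over $E\in\Th$ and using $h_E\le h$ gives $||\bfsigma-\bfsigma_I||_\Sigma\lesssim h\,(|\bfsigma|_{1,\O}+|\bdiv\bfsigma|_{1,\O})$. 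For the second term, since $RM(E)$ contains the constant vector fields, a standard Bramble--Hilbert argument yields $||\bbu-\bbu_I||_{0,E}\lesssim h_E|\bbu|_{1,E}$, hence $||\bbu-\bbu_I||_U\lesssim h\,|\bbu|_{1,\O}$; here the required $H^1$-regularity of $\bbu$ follows from the constitutive law $\teps(\bbu)=\D\bfsigma$ together with the assumed regularity of $\bfsigma$ and Korn's inequality. The fourth term is handled identically, $||\bfsigma-\bfsigma_\pi||_{0,E}\lesssim h_E|\bfsigma|_{1,E}$, so $||\bfsigma-\bfsigma_\pi||_{0,\O}\lesssim h\,|\bfsigma|_{1,\O}$.

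The third term is where the commuting diagram property is essential: by Proposition~\ref{pr:diagramCommuatative}, $\bdiv\bfsigma_I=\bdiv(\Ih\bfsigma)=\Pi_{RM}(\bdiv\bfsigma)$, and since the $L^2$-projection is non-expansive we obtain $||\bdiv\bfsigma_I||_{0,\O}\le||\bdiv\bfsigma||_{0,\O}=||\bbf||_{0,\O}$, using $\bdiv\bfsigma=-\bbf$. Thus $h\,||\bdiv\bfsigma_I||_{0,\O}\lesssim h\,||\bbf||_{0,\O}$. Inserting the four bounds into Proposition~\ref{pr:error-est} yields $||\bfsigma-\bfsigma_h||_\Sigma+||\bbu-\bbu_h||_U\lesssim h\,(|\bfsigma|_{1,\O}+|\bdiv\bfsigma|_{1,\O}+|\bbu|_{1,\O}+||\bbf||_{0,\O})$, and absorbing the solution-dependent seminorms into the constant $C(\Omega,\bfsigma,\bbu)$ gives the claimed estimate.

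I do not expect a genuine obstacle, since the preceding propositions already supply all the heavy lifting; the proof is essentially an assembly. The only points requiring care are the Sobolev embedding that makes $\Ih\bfsigma$ admissible, and the treatment of the $h\,||\bdiv\bfsigma_I||_{0,\O}$ term: rather than being estimated crudely, it is kept optimal precisely through the commuting diagram identity $\bdiv\Ih=\Pi_{RM}\bdiv$, which prevents any loss of order from the divergence contribution.
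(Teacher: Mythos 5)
Your proof is correct and follows essentially the same route the paper intends: the paper itself only states that ``a suitable choice of $\bbu_I$, $\bfsigma_I$, and $\bfsigma_\pi$'' combined with Proposition~\ref{pr:error-est} yields the estimate (deferring details to the cited reference), and your choices $\bfsigma_I=\Ih\bfsigma$, $\bbu_I=\Pi_{RM}\bbu$, $\bfsigma_\pi$ the piecewise-constant $L^2$-projection, together with Proposition~\ref{pr:approxest} and the commuting-diagram bound $\|\bdiv\bfsigma_I\|_{0,\O}\le\|\bbf\|_{0,\O}$, are exactly those choices. The two points you flag as needing care (the Sobolev embedding making $\Ih\bfsigma$ admissible, and keeping the divergence term optimal via $\bdiv\Ih=\Pi_{RM}\bdiv$) are handled correctly.
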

	\section{Numerical results}\label{s:numer}
	In this section we numerically assess the proposed VEM approach through the study of the method accuracy on a selected number of test problems. The numerical results confirm the proved theoretical results.
	We consider the standard unit cube $\O=[0,1]^3$ as the domain of our problems and we take the following three types of mesh: 
	
	\begin{itemize}
		\item \textbf{Cube,} a mesh composed by standard structured cubes;
		\item \textbf{Tetra,} a Delaunay tetrahedralization of the domain $\O$;
		\item \textbf{CVT,} a Voronoi tassellation obtained by the Lloyd algorithm~\cite{CVT1999};
		\item \textbf{Random,} a Voronoi tassellation achieved with random control points.
	\end{itemize}
	
	We remark that the meshes \textbf{CVT} and \textbf{Random} are very challenging. Indeed, they could have some elements with small faces and edges,
	and we remark that such case is \emph{not} covered by the developed theory, i.e. assumptions $\mathbf{(A1)}$, $\mathbf{(A2)}$ and $\mathbf{(A3)}$.
	However, the numerical results show that the proposed methods are fairly robust with respect to this geometric situation.
	These two type of meshes are build via the \texttt{voro++} library~\cite{Voro++}. 
	Moreover, the whole numerical scheme is developed inside the \texttt{vem++} library, a \texttt{c++} code realized at the Univeristy Milano - Bicocca during the 
	CAVE project ({\tt https://sites.google.com/view/vembic/home}).
	
	%
	%
	%
	%
	\newcommand{\sizeGraph}{0.49}
	\newcommand{\sizeMesh}{0.47}
	\begin{figure}[h!]
		\centering
		\renewcommand{\thesubfigure}{}
		\subfigure[Cube]{\includegraphics[width=\sizeMesh\textwidth,trim = 0mm 0mm 0mm 0mm, clip]{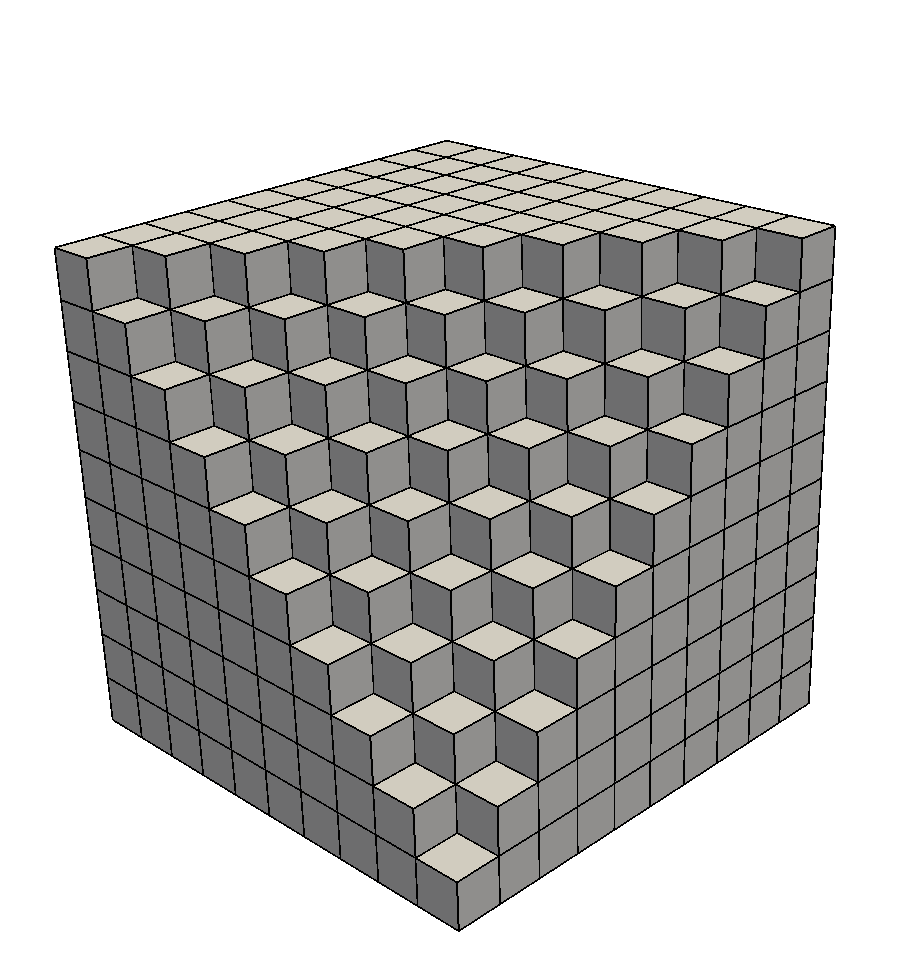}}\qquad
		\subfigure[Tetra]{\includegraphics[width=\sizeMesh\textwidth,trim = 0mm 0mm 0mm 0mm, clip]{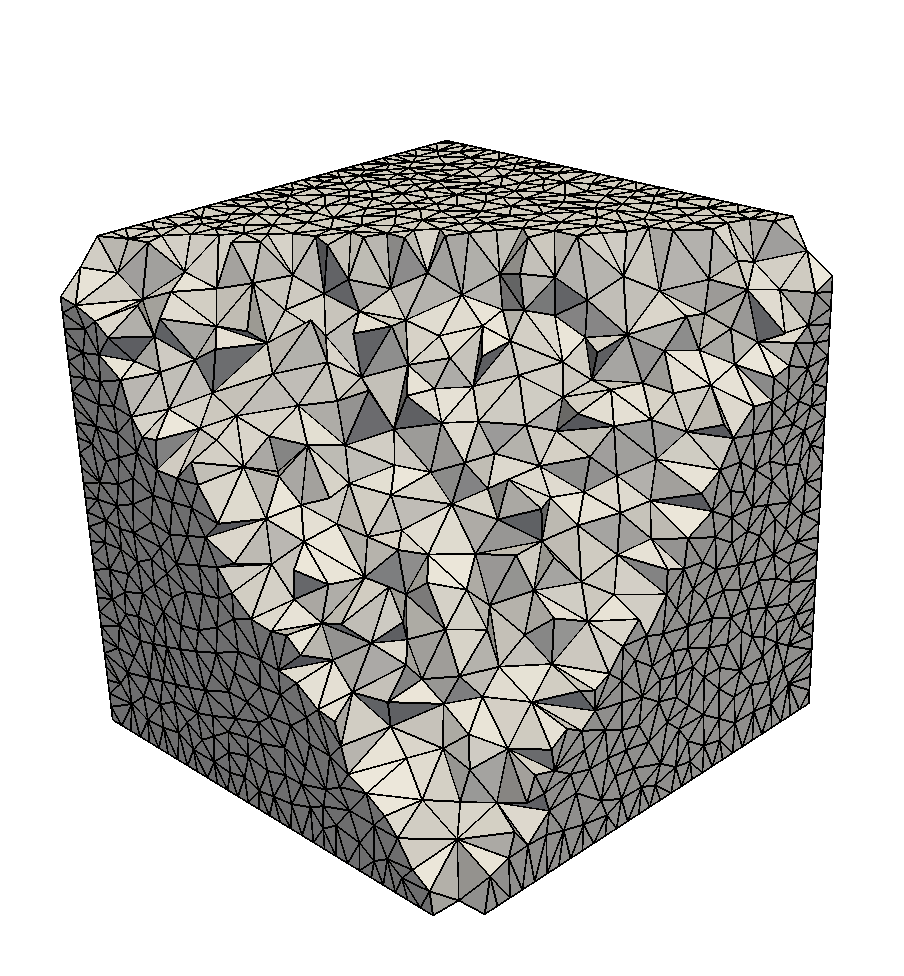}}\\
		\subfigure[CVT]{\includegraphics[width=\sizeMesh\textwidth,trim = 0mm 0mm 0mm 0mm, clip]{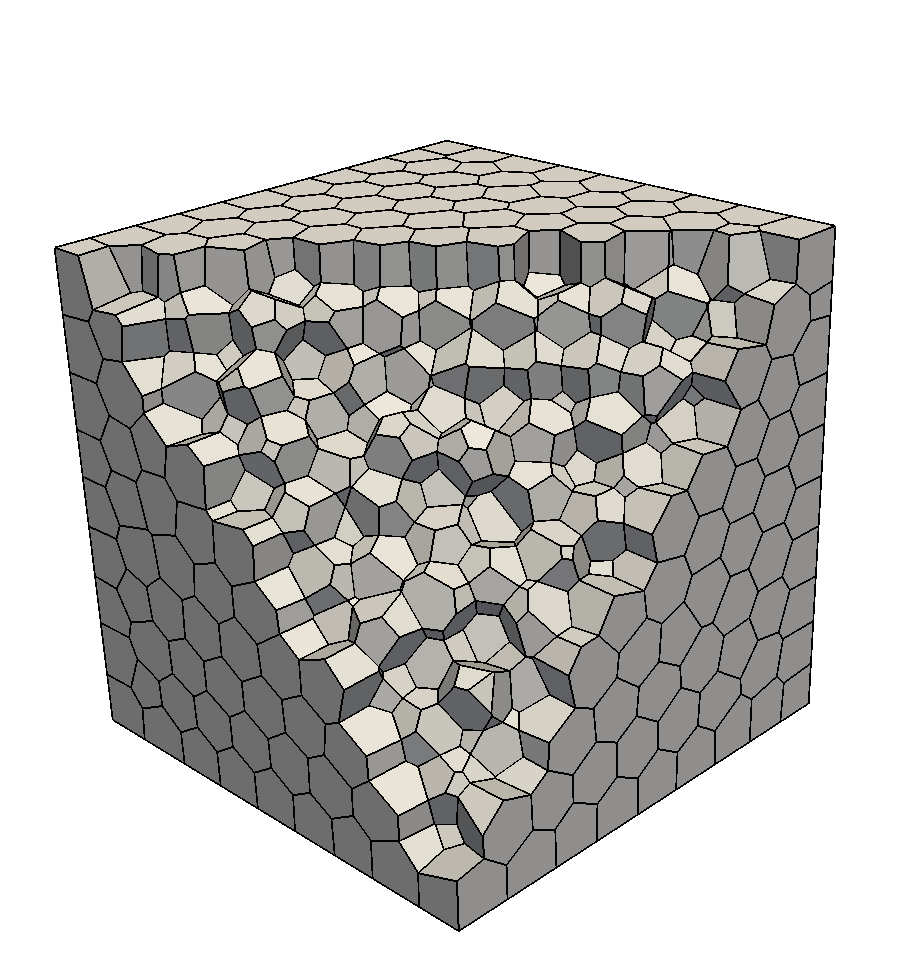}}\qquad
		\subfigure[Random]{\includegraphics[width=\sizeMesh\textwidth,trim = 0mm 0mm 0mm 0mm, clip]{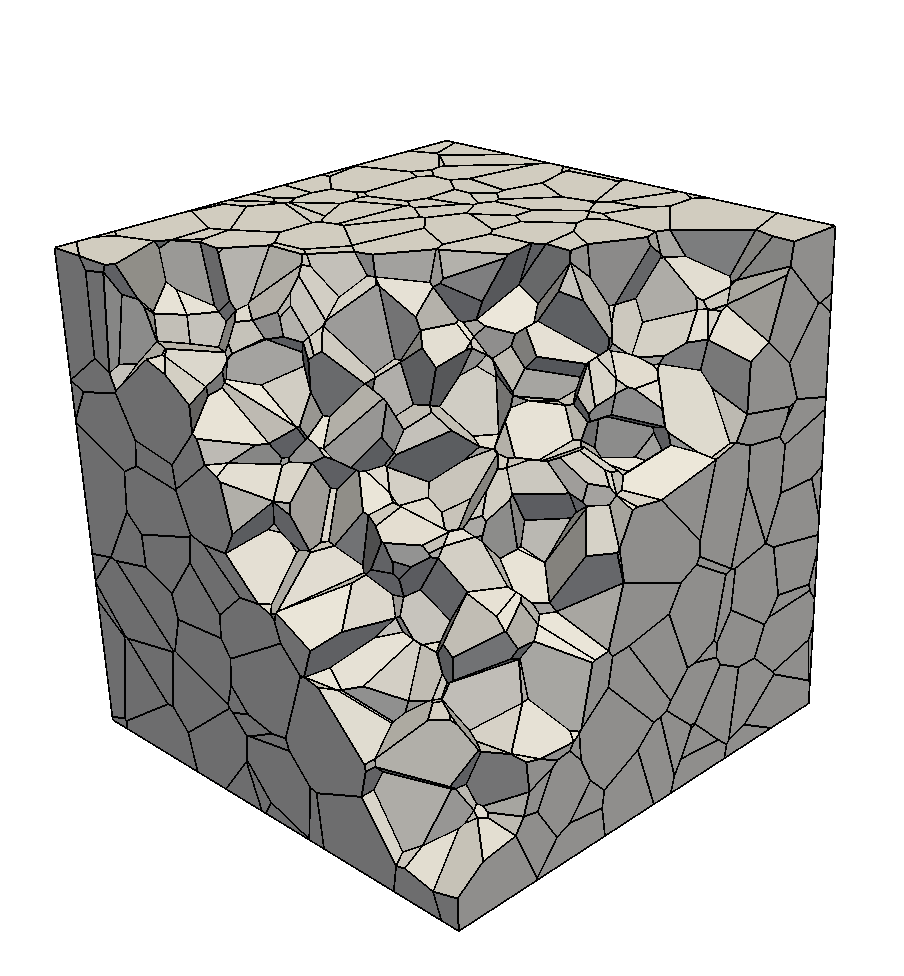}}
		\caption{Overview of adopted meshes for convergence assessment numerical tests.}
		\label{fig:meshes}
	\end{figure}
	In order to assess the convergence rate, for each type of mesh, we define the following mesh-size $h$:
	\begin{equation*}
	h:=\dfrac{1}{N_E}\sum_{i=1}^{N_E} h_E
	\end{equation*}
	where we recall that $N_E$ is the number of elements in the mesh, and $h_E$ is the diameter of the polyhedron $E$.
	%
	%
	The accuracy and the convergence rate assessment is carried out using the following error norms:
	\begin{itemize}
		
		\item[$\bullet$] $L^2$ error norm for the displacement field:
		\begin{equation*}
		E_{\bbu} :=\left( \sum_{E \in \Th} \int_{E}  | \bbu - \bbu_h |^2 \right)^{1/2}= ||\bbu - \bbu_h ||_0.
		\end{equation*}
		
		\item[$\bullet$] $L^2$ error on the divergence:
		\begin{equation*}
		E_{\bfsigma, \bdiv}   :=\left( \sum_{E \in \Th} \int_E | \bdiv(\bfsigma - \bfsigma_h)  |^2\right)^{1/2} .
		\end{equation*}
		
		\item[$\bullet$] $L^2$ error on the projection:
		\begin{equation*}
		E_{\bfsigma, \Pi}   :=\left( \sum_{E \in \Th} \int_E |\bfsigma - \Pi_E\bfsigma_h |^2\right)^{1/2} .
		\end{equation*}	
		
		\item[$\bullet$] Discrete error norms for the stress field:
		\begin{equation*}
		E_{\bfsigma}   :=\left( \sum_{f \in \Fh} h_f\int_{f} \kappa\,| (\bfsigma - \bfsigma_h)\bbn  |^2\right)^{1/2} ,
		\end{equation*}
		where $ \Fh$ is the set of faces for $\T_h$ and $\kappa=\frac{1}{2} {\rm tr}(\D)$ (the material is here homogeneous).
		We remark that the quantity above scales like the internal elastic energy, with respect to the size of the domain and of the elastic coefficients, i.e. $\sim h$.
	\end{itemize}
	%
	%
	%

	%
	%
	\paragraph{Example 1 (compressible material).}\label{par:Comprimibile}
	We consider an elastic problem with a trigonometric solution and homogeneous Dirichlet boundary conditions. The material of this problems obeys to a homogeneous isotropic constituive law, with material parameters assigned in terms of the Lam\'e constants, here set as $\lambda = 1$ and $\mu = 1$. Applied loads are accordingly computed. More precisely, the test details are as follows:
	\begin{eqnarray*}
	\left\{
	\begin{array}{l}
	u_1 = u_2 = u_3 = 10\, S(x,y,z) \\
	f1= -10\pi^2((\lambda+\mu)\cos(\pi x)\sin(\pi y +\pi z)- (\lambda + 4 \mu)S(x,y,z))\\
	f2= -10\pi^2((\lambda+\mu)\cos(\pi y)\sin(\pi x +\pi z)- (\lambda + 4 \mu)S(x,y,z))\\
	f3= -10\pi^2((\lambda+\mu)\cos(\pi z)\sin(\pi x +\pi y)- (\lambda + 4 \mu)S(x,y,z)),
	\end{array}
	\right .
	\end{eqnarray*}
	where $S(x,y,z)=\sin(\pi x)\sin(\pi y)\sin(\pi z)$.\\
	\begin{figure}[htb!]
		\centering
		\subfigure[]{\includegraphics[width=\sizeGraph\textwidth,trim = 0mm 0mm 0mm 0mm, clip]{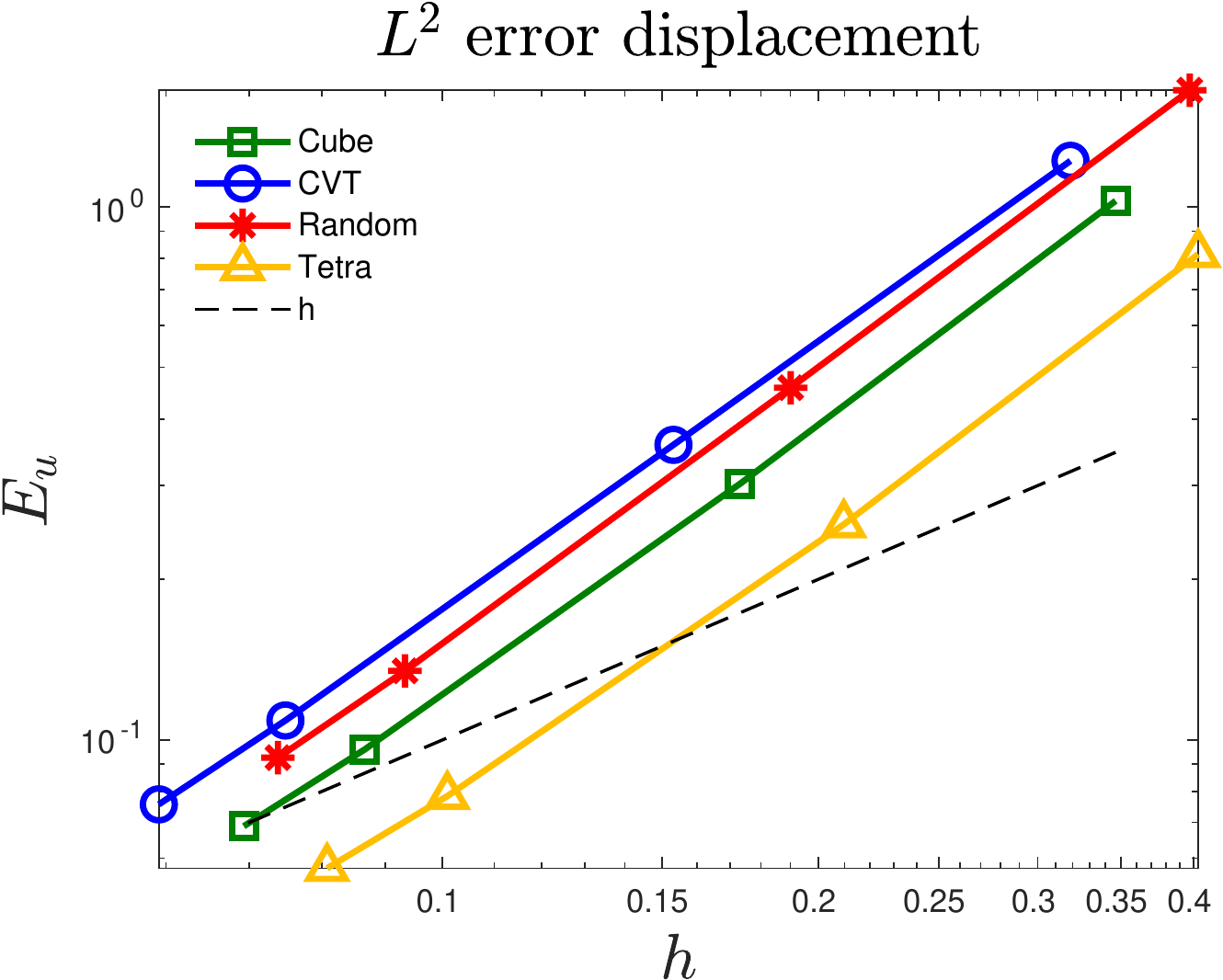}}
		\subfigure[]{\includegraphics[width=\sizeGraph\textwidth,trim = 0mm 0mm 0mm 0mm, clip]{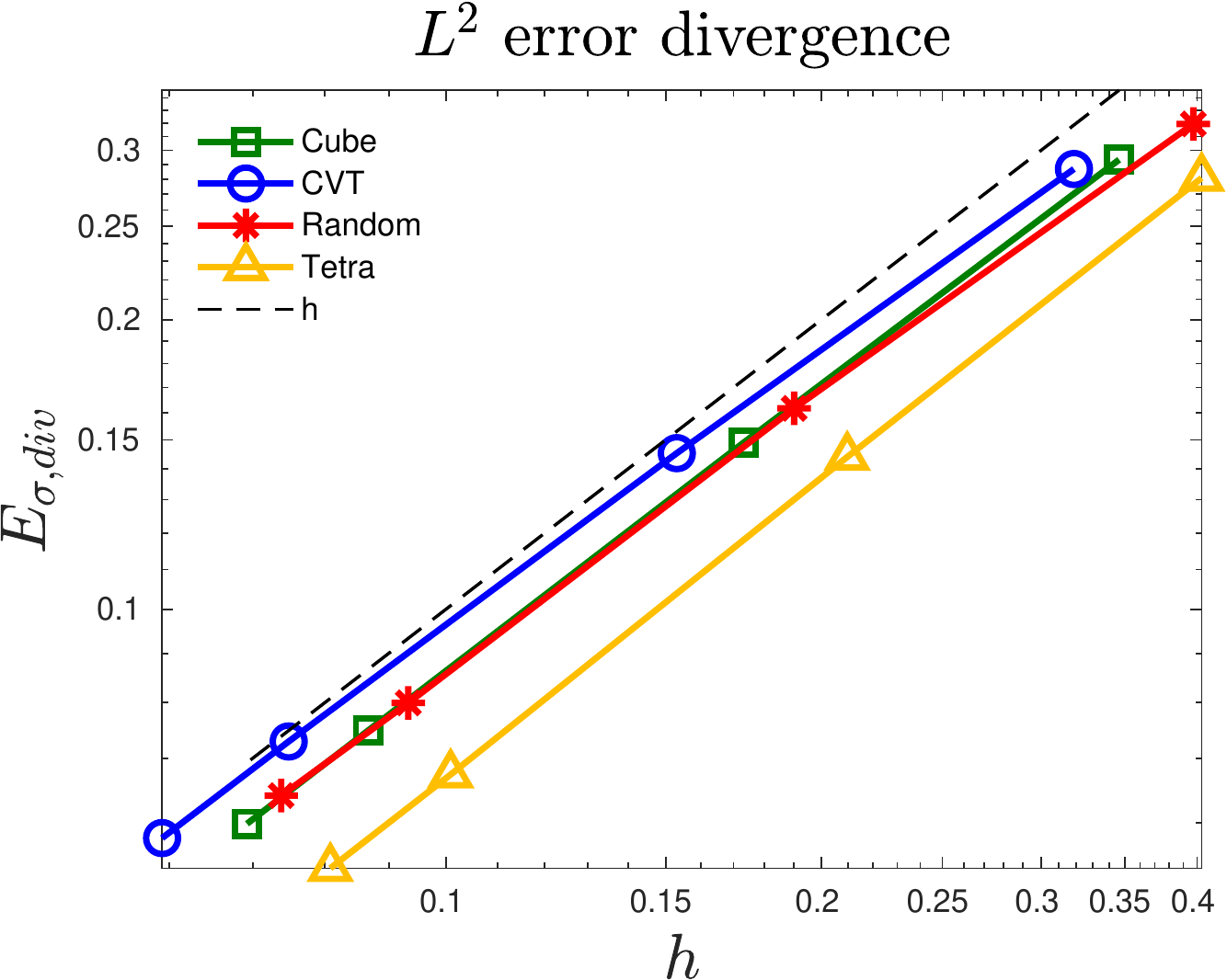}}\\
		\subfigure[]{\includegraphics[width=\sizeGraph\textwidth,trim = 0mm 0mm 0mm 0mm, clip]{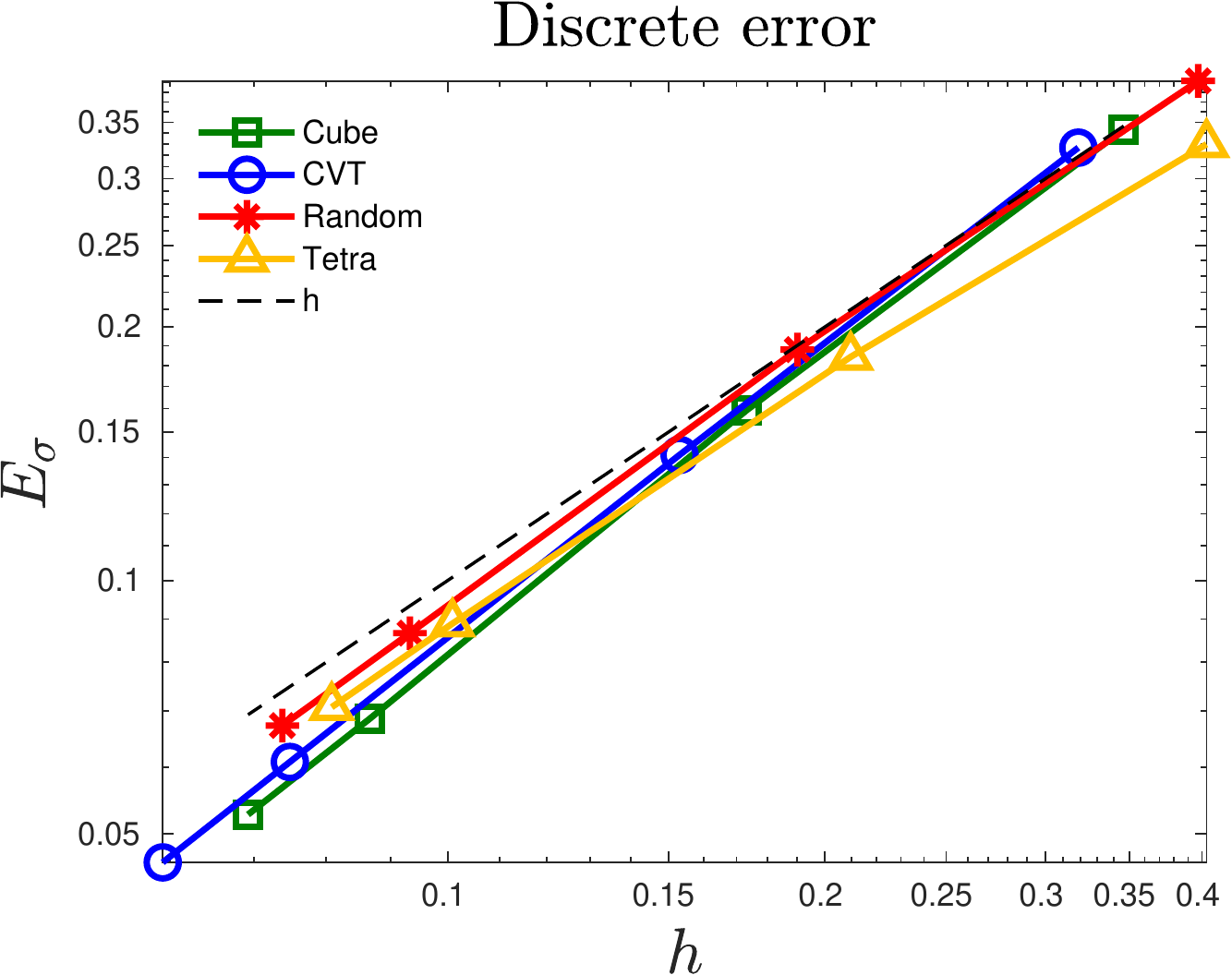}}
		\subfigure[]{\includegraphics[width=\sizeGraph\textwidth,trim = 0mm 0mm 0mm 0mm, clip]{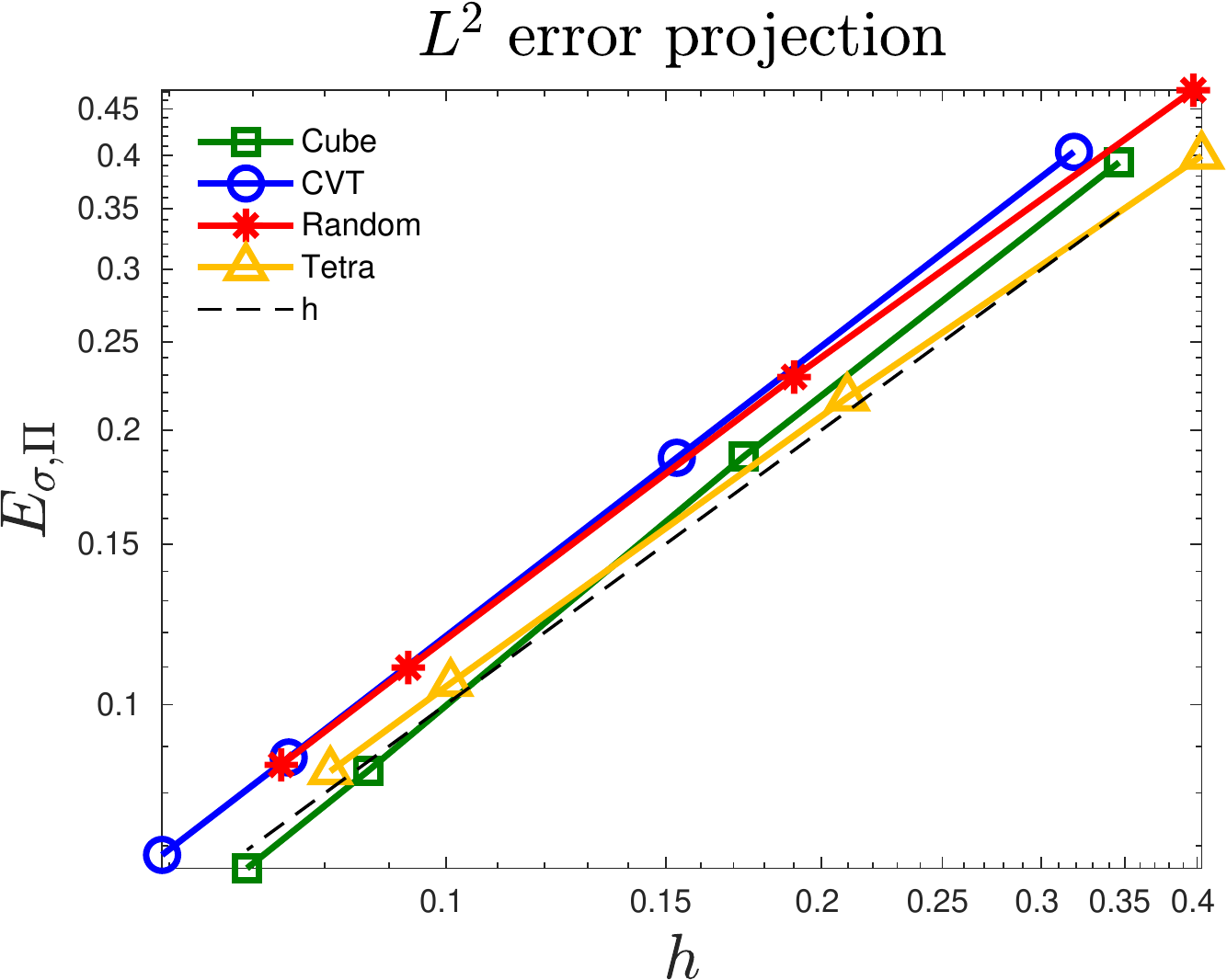}}\\
		\caption{\textbf{Example ${\bf 1}$ (compressible material):} $h$-convergence results for all meshes.}\label{fig:resuTest1}
	\end{figure}
Figure \ref{fig:resuTest1} 	reports $h$-convergence of the proposed methods for Example $1$. As expected, for the considered methods, the asymptotic convergence rate is approximately equal to $1$ for all error norms and meshes. In addition, the convergence graphs of each type of mesh are close to each others
and this fact confirms the robustness of the proposed method with respect to the element shape.
	\paragraph{Example 2 (nearly incompressible material).}\label{par:Incomprimibile}
	We consider a problem with known analytical solution. A nearly incompressible material is chosen by selecting Lam\'e 
	constants as $\lambda = 10^5$, $\mu = 0.5$. The test is designed by choosing a required solution for the displacement field and deriving the load $\bbf$ accordingly. The displacement solution is as follows:
	\begin{eqnarray*}
	\left\{
	\begin{array}{l}
	u_1={\sin(2\pi x)}^2\left(\cos(2\pi y)\sin(2\pi y){\sin(2\pi z)}^2-\cos(2\pi z)\sin(2\pi z){\sin(2\pi y)}^2 \right)\\
	u_2={\sin(2\pi y)}^2\left(\cos(2\pi z)\sin(2\pi z){\sin(2\pi x)}^2-\cos(2\pi x)\sin(2\pi x){\sin(2\pi z)}^2 \right)\\
	u_3={\sin(2\pi z)}^2\left(\cos(2\pi x)\sin(2\pi x){\sin(2\pi y)}^2-\cos(2\pi y)\sin(2\pi y){\sin(2\pi x)}^2 \right)
	
	\end{array}
	\right .
	\end{eqnarray*}
	In Figure \ref{fig:resuTest2} we report the convergence results for the proposed VEM approach. It can be clearly seen that our method shows the expected asymptotic rate of convergence for each kind of mesh.
	Moreover, also in this case the convergence lines are close to each other and this fact further confirms the robustness of the proposed scheme with respect to element shape. 
	\begin{figure}[htb!]
		\centering
		\subfigure[]{\includegraphics[width=\sizeGraph\textwidth,trim = 0mm 0mm 0mm 0mm, clip]{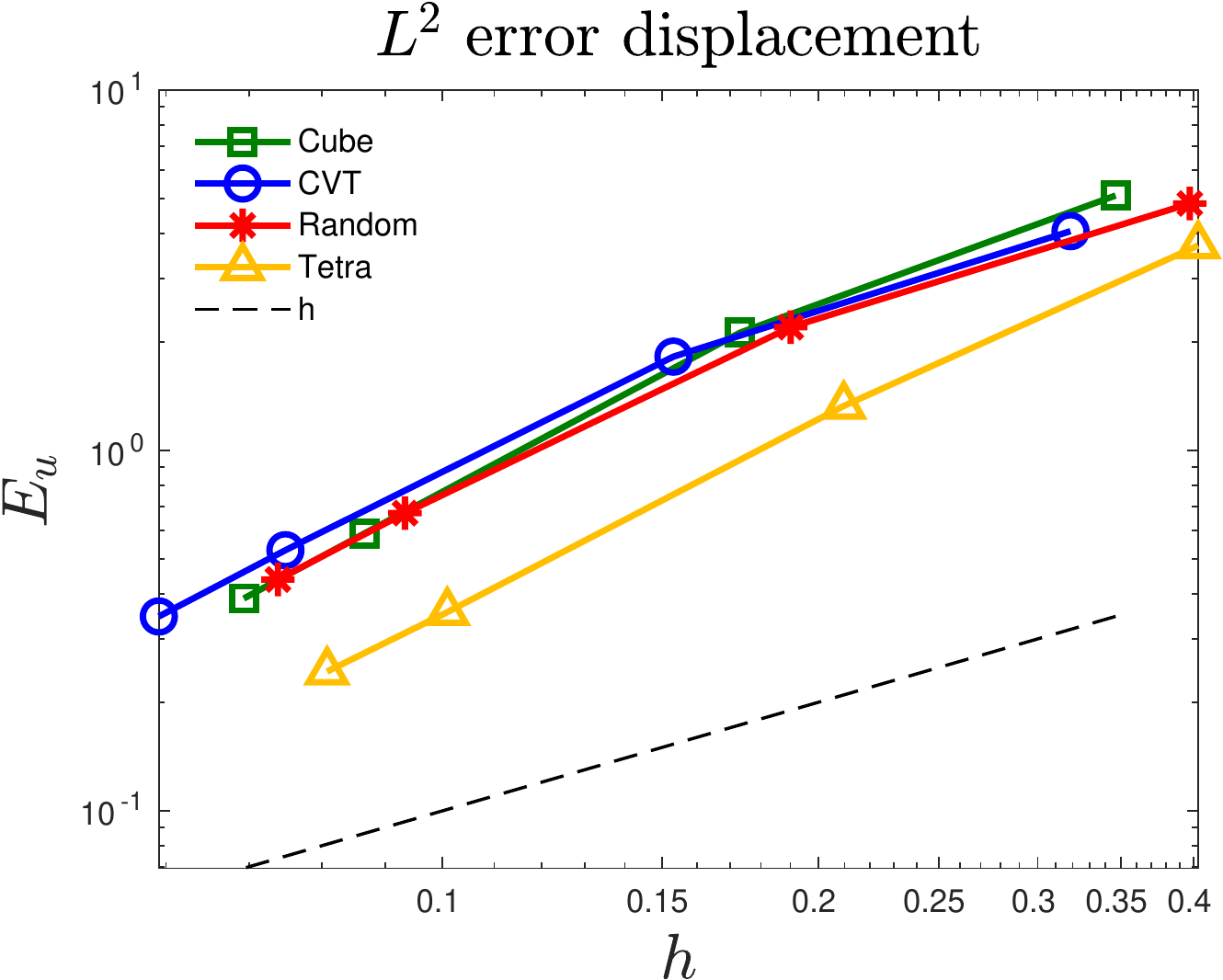}}
		\subfigure[]{\includegraphics[width=\sizeGraph\textwidth,trim = 0mm 0mm 0mm 0mm, clip]{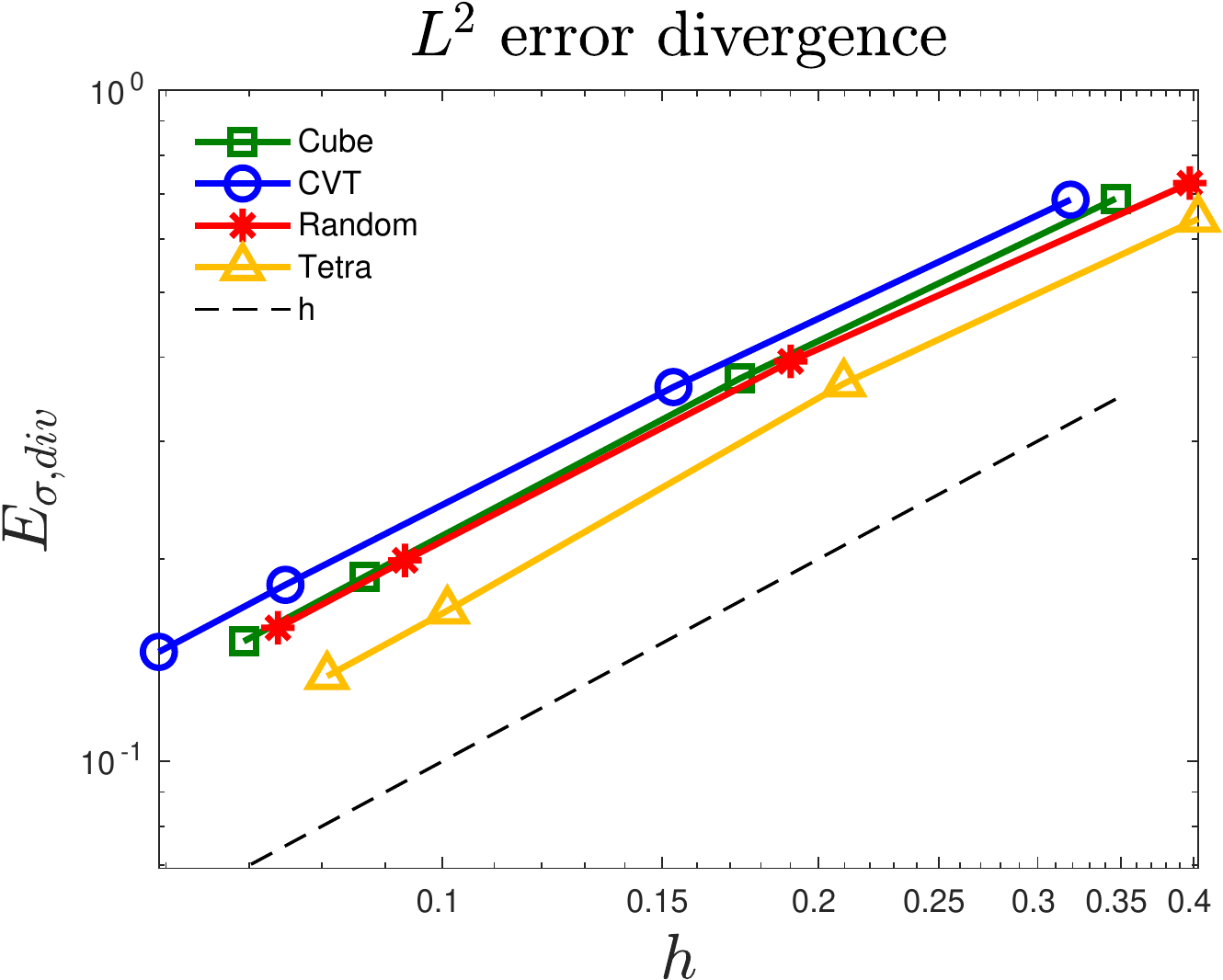}}\\
		\subfigure[]{\includegraphics[width=\sizeGraph\textwidth,trim = 0mm 0mm 0mm 0mm, clip]{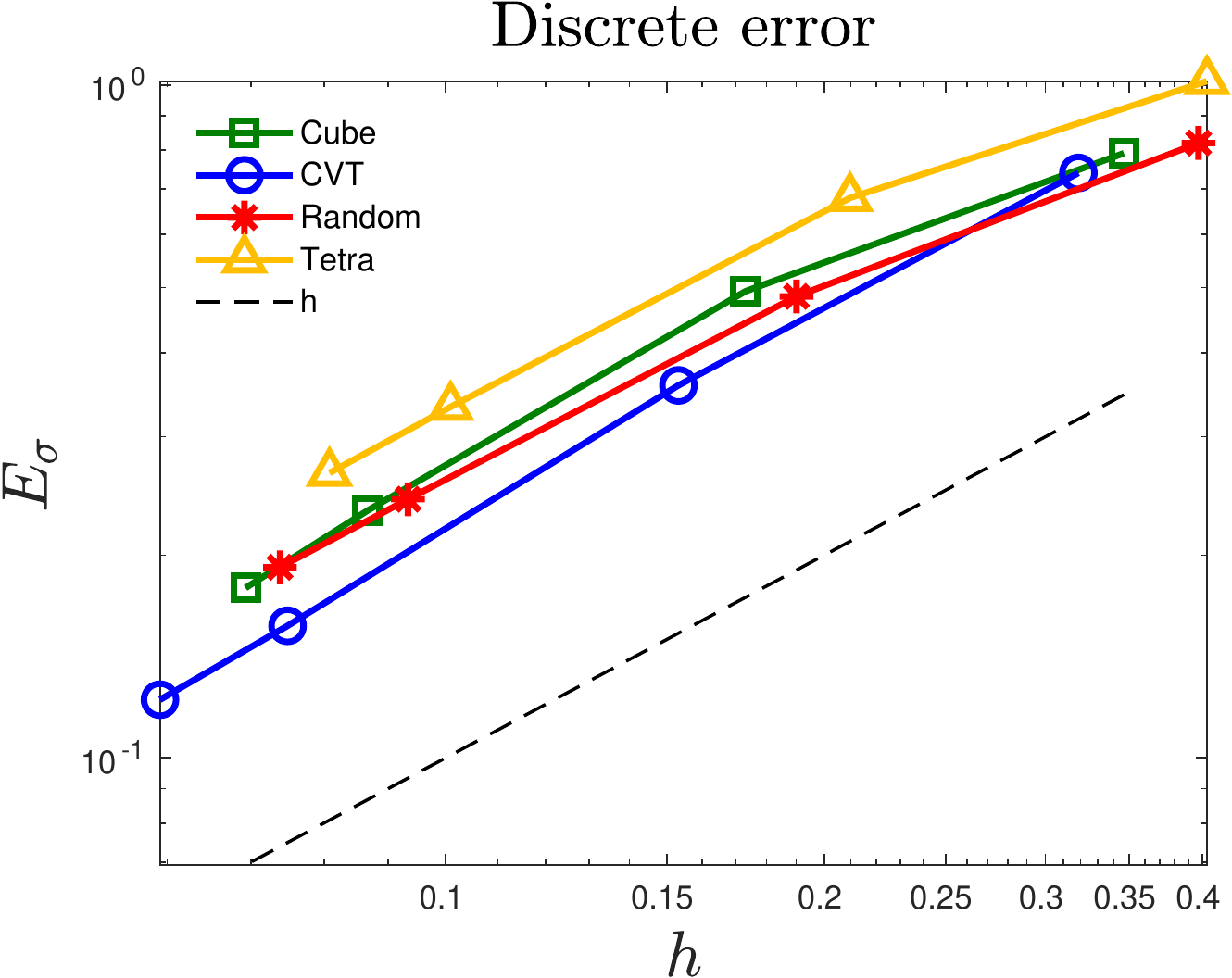}}
		\subfigure[]{\includegraphics[width=\sizeGraph\textwidth,trim = 0mm 0mm 0mm 0mm, clip]{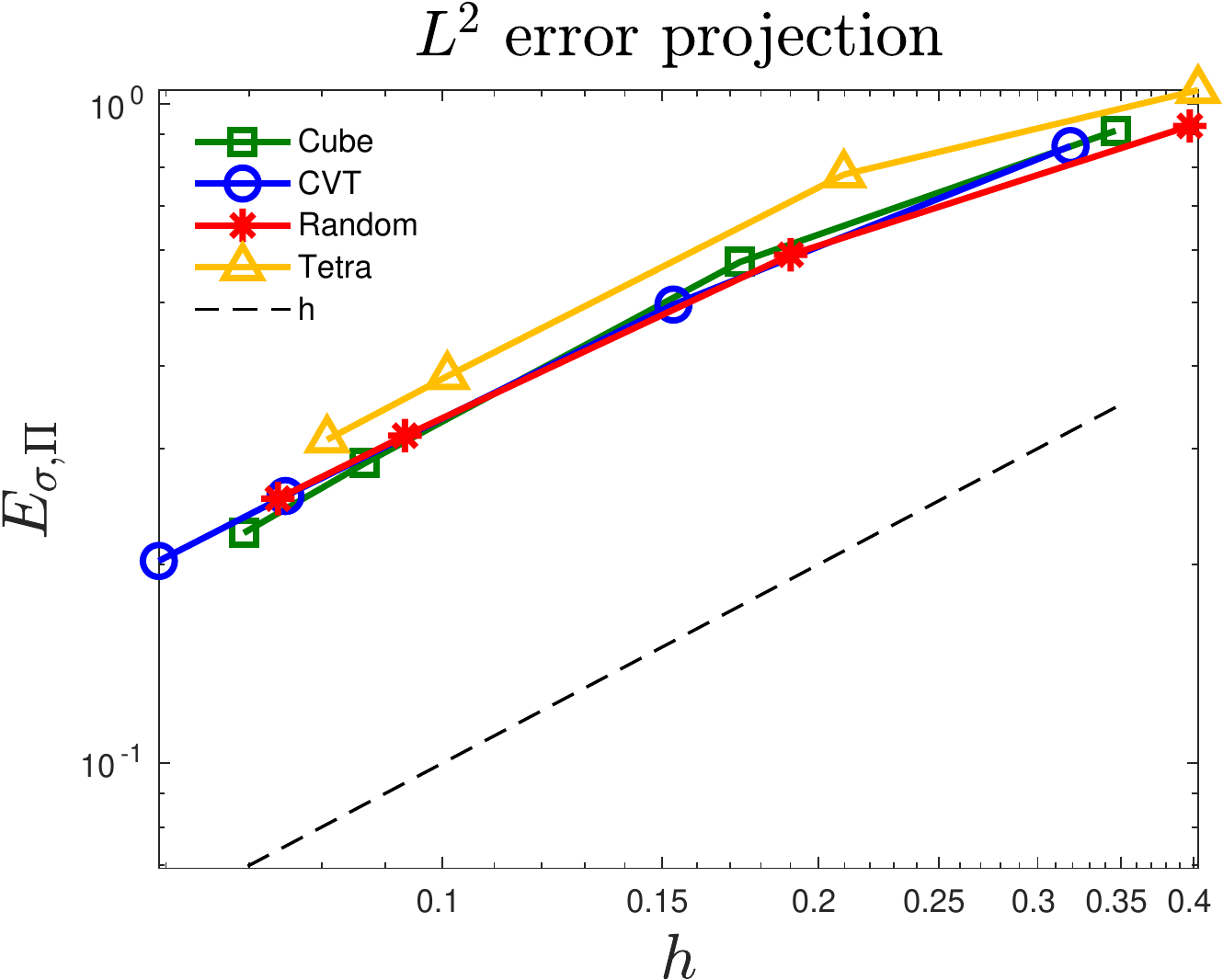}}\\
		\caption{\textbf{Example ${\bf 2}$ (nearly incompressible material):} $h$-convergence results for all meshes.\label{fig:resuTest2}}

	\end{figure}

	\paragraph{Example 3 (unloaded body).}\label{par:unloadingBody}
	We consider a problem with polynomial solution, non-homogeneous Dirichlet boundary conditions and zero loading. We take a homogeneous and isotropic material with Lam\'e constants $\lambda =1 $ and $\mu =1$ (compressible case). As in the previous examples, the test is defined by choosing a required solution and deriving the correspondig body load $\bbf$, as indicated in the following:
	\begin{eqnarray*}
	\left\{
	\begin{array}{l}
	u_1 = 2x^3-3xy^2-3xz^2\\
	u_2 = 2y^3-3yx^2-3yz^2\\
	u_3 = 2z^3-3zy^2-3zx^2\\
	\bbf=\bfzero
	\end{array}
	\right .
	\end{eqnarray*}
We remark that this is a typical example where the displacement field is nontrivial, while stresses are divergence-free. 
As expected, this latter feature is numerically satisfied by our VEM scheme.
Indeed, in Table~\ref{t:Test3} the errors $E_{\bfsigma,\bdiv}$ are close to the machine precision at each mesh refinement step. The other error behaviours are similar to the ones showed in the previous examples so we do not report such graphichs.
	\begin{table}[htb!]
		\begin{center}
			\begin{tabular}{c|c|c|c|c|}
				\cline{1-5}
				\multicolumn{1}{|c|}{Step} & {Cube}& {Tetra} & {CVT} & {Random}  \\ \cline{1-5}
				\multicolumn{1}{ |c| }{1} & 2.1904e-14 & 4.7821e-14 & 2.2369e-14 & 2.9712e-14 \\ 
				\multicolumn{1}{ |c| }{2} & 4.7351e-14 & 1.2589e-13 & 4.5054e-14 & 6.4063e-14 \\ 
				\multicolumn{1}{ |c| }{3} & 1.0024e-13 & 2.0757e-13 & 8.7751e-14 & 1.3174e-13 \\ 
				\multicolumn{1}{ |c| }{4} & 1.1793e-13 & 2.7652e-13 & 1.0922e-13 & 1.6482e-13  \\ \cline{1-5}
			\end{tabular}
			\caption{\label{t:Test3}\textbf{Example 3 (unloaded body):} $h$-convergence results for error $E_{\bfsigma, \bdiv}$.}
		\end{center}
	\end{table}


	%
	%
	%
	
	\section{Conclusions}\label{s:conclusions}
	We have proposed a Virtual Element Method for the linear elasticity 3D problem, based on the mixed Hellinger-Reissner variational principle.
	The scheme takes advantage of low-order approximation spaces for both the stresses and the displacements. In addition, the stresses are a-priori symmetric and with continuous normal component across the element interfaces.
	The convergence and stability analysis has been confirmed by some numerical results.
	A possible future development of the present paper may concern the design of schemes with reduced (minimal) degrees of freedom, by exploiting different variational principles (e.g. suitable augmented lagrangian formulations).
	
	
	
	\bibliographystyle{amsplain}
	
	\bibliography{general-bibliography,biblio,VEM}

\end{document}